
\documentclass[10pt]{amsart}

\setlength{\topmargin}{-1.cm} \setlength{\headsep}{1.6cm}
\setlength{\evensidemargin}{.7cm} \setlength{\oddsidemargin}{.7cm}
\setlength{\textheight}{21.0cm} \setlength{\textwidth}{15.0cm}

\let\oldmarginpar\marginpar
\renewcommand\marginpar[1]{\-\oldmarginpar[\raggedleft\footnotesize #1]
{\raggedright\footnotesize #1}}

\setlength{\parskip}{4pt}

\DeclareMathSizes{6}{18}{12}{8}


\usepackage{mathptmx}
\usepackage{amsmath}
\usepackage{amscd}
\usepackage{amssymb}
\usepackage{amsthm}
\usepackage{xspace}
\usepackage[all,tips]{xy}
\usepackage[dvips]{graphicx}
\usepackage{verbatim}
\usepackage{syntonly}
\usepackage{hyperref}
\usepackage{color}
\usepackage{fancyhdr}
\usepackage{mathdots}
\usepackage{ifthen}
\usepackage{enumitem}


\theoremstyle{plain}
\newtheorem{thm}{Theorem}[section]
\newtheorem{cor}[thm]{Corollary}
\newtheorem{prop}[thm]{Proposition}
\newtheorem{lemma}[thm]{Lemma}

\newtheorem{sch}[thm]{Scholium}
\newtheorem{claim}[thm]{Claim}

\newcommand{\showcomments}{yes}
\renewcommand{\showcomments}{no}

\newsavebox{\commentbox}
%
{\ifthenelse{\equal{\showcomments}{yes}}%
{\footnotemark
        \begin{lrbox}{\commentbox}
        \begin{minipage}[t]{1.25in}\raggedright\sffamily\tiny
        \footnotemark[\arabic{footnote}]}
{\begin{lrbox}{\commentbox}}}%
{\ifthenelse{\equal{\showcomments}{yes}}%
{\end{minipage}\end{lrbox}\marginpar{\usebox{\commentbox}}}
{\end{lrbox}}}



\DeclareMathOperator{\SL}{SL} 
\DeclareMathOperator{\GL}{GL} 
 
 \DeclareMathOperator{\SO}{SO}

\DeclareMathOperator{\SU}{SU} 
\DeclareMathOperator{\Sp}{Sp} 
 
 \DeclareMathOperator{\Comm}{Comm}

\DeclareMathOperator{\lcm}{lcm}

\DeclareMathOperator{\Res}{Res}

\DeclareMathOperator{\leng}{Length}

\DeclareMathOperator{\diag}{diag}
\DeclareMathOperator{\id}{Id}\DeclareMathOperator{\rk}{rank}





\newcommand{\rotsubset}{\mathbin{\rotatebox[origin=c]{90}{$\subset$}}}




\newcommand{\Fr}[1]{\ensuremath{\mathfrak{#1}}}


\newcommand{\Hy}{\ensuremath{\mathbb{H}}}
\newcommand{\N}{\ensuremath{\mathbb{N}}}
\newcommand{\Q}{\ensuremath{\mathbb{Q}}}
\newcommand{\R}{\ensuremath{\mathbb{R}}}
\newcommand{\Z}{\ensuremath{\mathbb{Z}}}
\newcommand{\C}{\ensuremath{\mathbb{C}}}


\usepackage{fancyhdr}

\pagestyle{fancy} \fancyhf{} \fancyhead[R]{\bfseries  \thepage}
\fancyhead[L]{\bfseries Arithmetic Progressions in the Primitive Length Spectrum}
\fancypagestyle{plain}

\title{\textbf{Arithmetic Progressions in the Primitive Length Spectrum}}


\begin{document}

\title{Arithmetic Progressions in the Primitive Length Spectrum}
\author{Nicholas Miller}
\address{Department of Mathematics\\Purdue University\\West Lafayette, IN 47907}
\email{mille965@purdue.edu}


\maketitle

\begin{abstract}
In this article, we prove that every arithmetic locally symmetric orbifold of classical type without Euclidean or compact factors has arbitrarily long arithmetic progressions in its primitive length spectrum.
Moreover, we show the stronger property that every primitive length occurs in arbitrarily long arithmetic progressions in its primitive length spectrum.
This confirms one direction of a conjecture of Lafont--McReynolds, which states that the property of having every primitive length occur in arbitrarily long arithmetic progressions characterizes the arithmeticity of such spaces.
\end{abstract}


\section{Introduction.}

\noindent Given a smooth manifold $M$ and a Riemannian metric $g$ on $M$, the \textbf{(primitive) length spectrum} is the collection of all lengths of (primitive) closed geodesics counted with multiplicity.
When $M$ is closed and $g$ is negatively curved, it is well known that there is a strong connection between the eigenvalue spectrum of the Laplacian acting on $L^2(M)$ and the length spectrum \cite{DG} \cite{Gangolli} \cite{PrasadRap}.
These spectra are known to determine basic geometric invariants like the dimension, volume, and total scalar curvature of $(M,g)$. 

Over the past several decades, there has been a great deal of activity in understanding to what extent either the eigenvalue or geodesic length spectrum determines the metric $g$.
For instance when $M$ is an arithmetic hyperbolic surface, Reid \cite{Reid} proved that the length spectrum determines its commensurability class.
Moreover Chinburg--Hamilton--Long--Reid \cite{CHLR} have shown that the length spectrum of $M$ determines its commensurability class amongst arithmetic, hyperbolic $3$--manifolds.
More recently, Prasad and Rapinchuk \cite{PrasadRap} have given a complete characterization of when the length spectrum determines the commensurability class of $M$ for all non-positively curved, arithmetic manifolds, including families of counterexamples starting in dimension $5$.
Their work uses the fact that, in the arithmetic setting, there are deep connections between the study of closed geodesics on  non-positively curved, arithmetic manifolds and the number theory of the corresponding arithmetic subgroup.
This number theoretic connection is a theme that is abundant in the study of the length spectrum of arithmetic manifolds.

Along these lines, several results exhibit a striking analogy between features of primitive geodesics in the length spectrum and features of prime numbers in the integers, or more generally in the rings of integers of number fields.
For instance if $M$ is compact, results of Margulis \cite{Margulis} show that the asymptotic growth rate of primitive, closed geodesics on $M$ is analogous to that of the prime number theorem.
If one further assumes the manifold $M$ is arithmetic, there are a number of other results further elucidating the analogy between primitive geodesics and primes.
For instance, Selberg \cite{Selberg} noticed that it is possible to form a zeta function, defined by a certain product over primitive geodesics, which can be shown to enjoy analogous properties to the Riemann zeta function including the anaologue of the Riemann hypothesis.
More examples of such phenomena are the holonomy distribution theorems of Parry--Pollicott \cite{ParPol}, Sarnak--Wakayama \cite{SarWaka}, and Margulis--Mohammadi--Oh \cite{MMO}.
Such theorems are meant to give the analogue of the Chebotarev density theorem for closed geodesics on an arithmetic, hyperbolic manifold.
The purpose of this article is to expand the collection of such theorems, by providing the geodesic analogue of the Green--Tao theorem on progressions in the prime numbers \cite{GT}.

Recently, Lafont--McReynolds \cite{LaMcR} have proved such an analogue for primitive, closed geodesics on the modular curve.
Indeed, Lafont--McReynolds show that the primitive length spectrum of the modular curve contains arbitrarily long arithmetic progressions.
Moreover, they show that a much stronger property holds, namely that every primitive length occurs in a $k$-term arithmetic progression for any $k$.
It is then conjectured that having the stronger property, i.e. that every primitive length occurs in arbitrarily long arithmetic progressions, is indeed a characterization of the arithmeticity of $M$.
The goal of this article is to prove one direction of this conjecture by generalizing the result of Lafont--McReynolds to all arithmetic locally symmetric orbifolds of classical type.
In particular, we prove the following theorems.

\begin{thm}\label{ProgTheorem}
If $M$ is an arithmetic locally symmetric orbifold of classical type, then the primitive length spectrum $\mathcal{L}_p(M)$ has arbitrarily long arithmetic progressions.
More precisely, for every $k$ there is a $k$-term arithmetic progression of the form $\{(ai+b)\ell\}_{i=1}^k$ in $\mathcal{L}_p(M)$, where $\ell$ is the length of some primitive closed geodesic and $\{ai+b\}_{i=1}^k$ is a subset of the natural numbers.
\end{thm}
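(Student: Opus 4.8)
The plan is to reduce the theorem, via the standard dictionary between closed geodesics and conjugacy classes, to the following assertion: for a fixed loxodromic element $\gamma_0\in\Gamma$ and all sufficiently large $N$ lying in a fixed residue class, the group $\Gamma$ contains a \emph{primitive} element with the same characteristic polynomial as $\gamma_0^N$. Granting this, one is done. Indeed, the length of a closed geodesic on $M=\Gamma\backslash X$ is a symmetric function of the numbers $\log|\lambda_j(\gamma)|$, where $\lambda_1,\dots,\lambda_n$ are the eigenvalues of $\gamma$ in the standard representation of the classical group $G$; it therefore depends only on the characteristic polynomial of $\gamma$, and it scales linearly under powers, $\ell(\gamma^N)=N\ell(\gamma)$, since a loxodromic element translates along its axis by a vector that is multiplied by $N$ on passing to the $N$-th power. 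So if $\ell$ denotes the length of the (primitive) closed geodesic of $\gamma_0$ — whence $\ell\in\mathcal{L}_p(M)$ — any $\gamma_N\in\Gamma$ with characteristic polynomial $p_N:=\det(xI-\gamma_0^N)$ gives a closed geodesic of length exactly $N\ell$, primitive iff $\gamma_N$ is not a proper power in $\Gamma$. The polynomial $p_N$ is squarefree unless some ratio $\lambda_i/\lambda_j$ is a root of unity of order dividing $N$; since only finitely many orders occur — and none equals $1$ once $\gamma_0$ is taken regular semisimple, which we may arrange — there is a residue class $c\pmod q$ for which $p_N$ is squarefree for every member. Choosing $a=q$ and $b$ any sufficiently large element of that class then exhibits the lengths $(ai+b)\ell$, $i=1,\dots,k$, as a $k$-term arithmetic progression $\{(ai+b)\ell\}_{i=1}^k$ in $\mathcal{L}_p(M)$ of the required shape.

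The heart of the matter is the construction of primitive $\gamma_N$, and this is where class-number growth enters. Fix $N$ with $p_N$ squarefree; then $E:=\Q[x]/(p_N)$ is a product of number fields that does \emph{not} depend on $N$ (each $\lambda_j^N$ generates the same field as $\lambda_j$), equipped with the involution induced by the form defining $G$ in the orthogonal, symplectic and unitary cases. By the classification of maximal $\Q$-tori of classical groups in terms of étale algebras with involution, together with a Latimer--MacDuffee-type correspondence, the $\Gamma$-conjugacy classes of elements of $\Gamma$ with characteristic polynomial $p_N$ form a finite class set whose size is, up to a factor bounded independently of $N$, the class number of the \emph{order} $\mathcal{O}_N:=\Z[\gamma_0^N]$. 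Although $E$ is fixed, the conductor of $\mathcal{O}_N$ in $\mathcal{O}_E$ is exponentially large in $N$: indeed $|\mathrm{disc}(p_N)|=\prod_{i<j}|\lambda_i^N-\lambda_j^N|^2$ grows exponentially because $\gamma_0$ is loxodromic, so that (via the analytic class number formula for $\mathcal{O}_N$, a Brauer--Siegel lower bound, and the merely polynomial growth in $N$ of the unit index and regulator) the class set has cardinality $\exp(cN+o(N))$ for a constant $c>0$ depending only on $\gamma_0$. On the other hand, if $\gamma$ has characteristic polynomial $p_N$ and equals $\delta^m$ with $m\ge 2$ and $\delta\in\Gamma$, then squarefreeness of $p_N$ forces $\Q[\delta]=\Q[\gamma]$, hence $m\mid N$ and the eigenvalues of $\delta$ are the $(N/m)$-th powers of those of $\gamma_0$ up to roots of unity of $E$; such $\delta$ has one of boundedly many characteristic polynomials, each with associated order of conductor comparable only to $|\mathrm{disc}(p_N)|^{1/(2m)}$, so the non-primitive classes number at most $\exp((c/2)N+o(N))$. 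This is negligible against $\exp(cN+o(N))$, so a primitive class — equivalently, a primitive closed geodesic of length $N\ell$ — exists for all large $N$, which completes the argument.

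The main obstacle is the second step, and it is precisely here that the hypothesis of \emph{classical type} is indispensable. Three points carry the weight. First, one must correctly identify the class set parametrizing elements of the \emph{given} arithmetic group $\Gamma$ with a prescribed characteristic polynomial, and guarantee it is nonempty for $\Gamma$ itself and not merely some commensurable lattice — a genuine issue, since commensurable lattices have different length spectra; this rests on the classification of maximal tori of classical $G$ by algebras with involution, on the local-global results of Kneser and Harder for $H^1$ of the simply connected cover of $G$, and on a strong-approximation argument to place the constructed element inside $\Gamma$. Second, one needs a Brauer--Siegel-type lower bound for the class number of the non-maximal order $\mathcal{O}_N$, uniform enough to dominate the divisor-function-many contributions of the proper-power classes. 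Third, this counting of proper powers must be carried out for \emph{every} large $N$, not merely generically — which is why the progression is permitted to begin arbitrarily far out, through the parameter $b$. The geometric reduction of the first step is, by contrast, essentially formal once the dictionary between closed geodesics and conjugacy classes is in place.
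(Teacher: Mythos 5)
Your proposal takes a route that is genuinely different from the paper's, and unfortunately it has a real gap at its core. The paper's proof is essentially non-analytic: it fixes an absolutely primitive $\gamma$ and conjugates by carefully chosen elements $\eta$ of the commensurator; Bruhat--Tits theory for the parahorics at a prime $v$ shows that the minimal return power $n(\gamma,\eta^r)$ jumps by exactly a factor $q_v^{\epsilon}$ as $r$ increases (Theorem~\ref{primepowerseqthm}), which produces primitive lengths of the form $C\,q_{v_1}^{t_1}\cdots q_{v_k}^{t_k}\ell$. The progression is then extracted from a Green--Tao progression among the primes $q_v$, $v\in S$, and Van der Waerden handles commensurability. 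You instead attempt to prove the much stronger statement that \emph{every} sufficiently large $N$ in a fixed residue class yields a primitive class with characteristic polynomial $p_N$ --- an infinite genuine arithmetic progression of primitive lengths. The fact that this is so much stronger than the theorem, and that the paper goes out of its way to produce only the sparse multiplicative set $\{Cp_1^{t_1}\cdots p_k^{t_k}\ell\}$ and then needs all of Green--Tao, should already raise suspicion that your conclusion cannot be reached so cheaply.

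The concrete gap is the class-number-growth step. You need $h(\Z[\gamma_0^N])$ to grow like $\exp(cN)$ with $c>0$ so that it overwhelms the $\exp((c/2)N)$ proper-power contribution. By the class number formula for a non-maximal order,
\begin{equation*}
h(\Z[\gamma_0^N]) \;=\; \frac{h(\mathcal{O}_E)\cdot\bigl[(\mathcal{O}_E/\mathfrak{f}_N)^\times : (\Z[\gamma_0^N]/\mathfrak{f}_N)^\times\bigr]}{\bigl[\mathcal{O}_E^\times : \Z[\gamma_0^N]^\times\bigr]},
\end{equation*}
so the growth hinges entirely on the unit index $u_N = [\mathcal{O}_E^\times : \Z[\gamma_0^N]^\times]$ being subexponential in $N$. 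You assert ``merely polynomial growth in $N$ of the unit index and regulator'' but give no argument, and this is the crux: the map $\mathcal{O}_E^\times/\Z[\gamma_0^N]^\times \hookrightarrow (\mathcal{O}_E/\mathfrak{f}_N)^\times/(\Z[\gamma_0^N]/\mathfrak{f}_N)^\times$ shows only that $u_N$ is bounded by the numerator, not that it is small. When the unit rank $r$ of $E$ exceeds $1$ and the other fundamental units are unrelated to $\gamma_0$, their images modulo $\mathfrak{f}_N$ can plausibly have order comparable to the conductor, which is exponential in $N$. In the rank-$1$ (modular curve) case of Lafont--McReynolds the index is controlled by $N$ because $\gamma_0$ is essentially the fundamental unit, but this is precisely the feature that does not persist in higher rank --- and is one reason the paper abandons the ideal-class-counting approach entirely. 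Absent a proof of $u_N = \exp(o(N))$, the class set could fail to grow and the argument collapses.

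Two further points should be flagged even if the unit-index problem were resolved. First, the Latimer--MacDuffee-type parametrization of $\Gamma$-conjugacy classes in a fixed arithmetic subgroup of a classical group by ideal classes of an order (up to bounded factor) is a substantial theorem in its own right, involving Hermitian forms, $H^1$ of the simply connected cover, and strong approximation; you cite it but your proof depends delicately on the ``bounded factor'' being uniform in $N$, which is not automatic. Second, your claim that any $\delta\in\Gamma$ with $\delta^m=\gamma$ and $p_N$ squarefree forces $m\mid N$ with $\delta$'s eigenvalues being $(N/m)$-th powers of those of $\gamma_0$ (up to roots of unity) is only valid if $\gamma_0$ is already minimal in the sense of the paper's ``absolutely primitive'' condition (Section~\ref{AbsPrimSection}); otherwise $\delta$ can have eigenvalues that are proper roots of those of $\gamma_0$ without $m\mid N$. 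For Theorem~\ref{ProgTheorem} alone you are free to choose $\gamma_0$ to be absolutely primitive, but you should say so and invoke Lemma~\ref{absprimlemma}. Until the unit-index growth is established, however, the proposal does not constitute a proof.
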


\noindent Moreover, we show the strong version of this theorem for such spaces.

\begin{thm}\label{EveryPrimTheorem}
If $M$ is an arithmetic locally symmetric orbifold of classical type, then every primitive length of $\mathcal{L}_p(M)$ occurs in arbitrarily long arithmetic progressions.
More precisely, for any $\ell\in\mathcal{L}_p(M)$ and any $k$, there is a $k$-term arithmetic progression of the form $\{(ai+b)\ell\}_{i=1}^k$ in $\mathcal{L}_p(M)$, where $\{ai+b\}_{i=1}^k\subset\N$.
\end{thm}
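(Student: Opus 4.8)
The plan is to reduce the statement to a number-theoretic fact about traces of elements in arithmetic groups, and then to produce the arithmetic progression by finding suitable powers of a fixed hyperbolic element together with other elements whose translation lengths realize the required multiples of $\ell$. Fix a primitive closed geodesic $c$ of length $\ell$ in $M = \Gamma\backslash X$, corresponding (up to conjugacy) to a primitive hyperbolic element $\gamma_0 \in \Gamma$ with translation length $\ell_{X}(\gamma_0) = \ell$. The first observation is that for each positive integer $n$, the element $\gamma_0^n$ has translation length exactly $n\ell$, and it is the $n$-th power of a primitive element; what we actually need is, for a carefully chosen arithmetic progression $\{ai+b\}_{i=1}^k$ of positive integers, \emph{primitive} closed geodesics of lengths $(ai+b)\ell$. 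So the real task is: given that $n\ell$ is a length in the (non-primitive) length spectrum for every $n$, upgrade infinitely many of these — in fact a full $k$-term progression — to the \emph{primitive} length spectrum.

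First I would set up the arithmetic framework: write $\Gamma$ as (commensurable with) the integer points of an algebraic group $\mathbf{G}$ of classical type defined over a number field $F$, and recall that the translation length of a semisimple element $\gamma$ is, up to a universal constant depending only on $X$, determined by the eigenvalues of $\gamma$ in a suitable representation — equivalently by the absolute values of its eigenvalues, hence governed by quantities like $\log \lambda$ where $\lambda$ is a real eigenvalue $>1$. Thus $\ell = C \log \lambda_0$ for the relevant eigenvalue $\lambda_0$ of $\gamma_0$, and an element of translation length $n\ell$ is one with relevant eigenvalue $\lambda_0^n$. Such an element is manifestly non-primitive only when it is itself a proper power; the strategy to guarantee primitivity is to exhibit, for each target $n$ in our progression, an element $\delta_n \in \Gamma$ with the correct eigenvalue $\lambda_0^n$ (so the correct translation length $n\ell$) which is \emph{not} a proper power in $\Gamma$. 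This is the step where I expect to invoke the kind of arithmetic input used by Lafont--McReynolds: one controls whether an element is a proper power by controlling its eigenvalue field, e.g. ensuring $F(\lambda_0^{n})$ has no intermediate subfield forcing a root, or by a direct argument that the corresponding maximal torus / quadratic (or higher) subfield is generated at level exactly $n$.

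The main obstacle, and the heart of the argument, is constructing these elements $\delta_n$ \emph{simultaneously in arithmetic progression}. The cleanest route is: reduce to finding an arithmetic progression $\{ai+b\}$ of integers $n$ for which $\lambda_0^{n}$ is the fundamental unit-power realizing a primitive geodesic — and here the Green--Tao theorem enters, applied not to $\Z$ directly but to a set of exponents $n$ for which primitivity is guaranteed. Concretely, I would show that the set $S$ of $n \in \N$ such that $\gamma_0^{n}$ (or an associated element with eigenvalue $\lambda_0^{n}$) is conjugate into $\Gamma$ as a primitive element contains the values where $n$ avoids a controlled set of "bad" arithmetic patterns (those making the eigenvalue field drop), and that $S$ has positive relative density among the primes or among a Green--Tao-admissible set; then Green--Tao (or Green--Tao--Ziegler, or Szemerédi in the positive-density case) supplies arbitrarily long arithmetic progressions inside $S$. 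Multiplying through by $\ell$ gives the desired $k$-term progression $\{(ai+b)\ell\}_{i=1}^{k}$ in $\mathcal{L}_p(M)$. Two technical points will need care: (i) the classical-type hypothesis must be used to ensure that the relevant representation faithfully records translation length and that eigenvalues lie in a number field of bounded degree, so the primitivity criterion is uniform; and (ii) one must check that the constructed elements actually lie in $\Gamma$ (not merely in the commensurable arithmetic group or in $\mathbf{G}(F)$), which is handled by passing to a finite-index subgroup and absorbing the change into the constants $a, b$ of the progression. I expect step (ii), reconciling "eigenvalue exists in the ambient group" with "element lies in $\Gamma$ and is primitive there," to be the most delicate bookkeeping, while the conceptual crux is the density estimate feeding Green--Tao.
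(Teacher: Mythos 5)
Your proposal identifies the right high-level goal (for a fixed primitive $\gamma_0$ of length $\ell$, produce primitive elements of lengths $n\ell$ for $n$ in a long arithmetic progression, and invoke Green--Tao somewhere), but it has no mechanism for actually producing the primitive elements $\delta_n$, and this is where the entire technical content of the paper lives. You write ``exhibit, for each target $n$ in our progression, an element $\delta_n\in\Gamma$ with the correct eigenvalue $\lambda_0^n$ which is not a proper power in $\Gamma$,'' and then hope the set of achievable $n$ has positive density in the primes so Green--Tao applies. But there is no reason a priori that, for any given $n$, the lattice $\Gamma$ contains such a $\delta_n$: the element $\gamma_0^n$ itself is manifestly a proper power, and a conjugate $g\gamma_0^n g^{-1}$ returns to $\Gamma$ only for $g$ and $n$ satisfying strong arithmetic constraints. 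The paper's solution is to take $g = \eta^r$ for carefully chosen admissible $\eta\in\Comm(\Gamma)$ and to analyze, via Bruhat--Tits parahoric filtrations of $G_v$, exactly which power $n(\gamma,\eta^r)$ of $\gamma$ returns $\eta^r\gamma^{n}\eta^{-r}$ to $\Gamma$. The lengths one obtains this way are not $n\ell$ for $n$ ranging freely; they are $C\,q_{v_1}^{t_1}\cdots q_{v_k}^{t_k}\,\ell$ for finitely many split places $v_i$ and exponents $t_i$. Crucially, Green--Tao is applied not to your putative set $S$ of ``good exponents'' but to the set of rational primes $q_v$ arising from split places, which has positive density by Chebotarev (Lemma \ref{PrimeDensLemma}); one then observes that a $k$-term progression of primes $p_1,\dots,p_k$ among the $q_v$ yields the progression $\{Cp_i\ell\}_{i=1}^k$ inside the multiplicative set $\{Cq_{v_1}^{t_1}\cdots q_{v_k}^{t_k}\ell\}$. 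Without this mechanism (or a substitute), your density hypothesis on $S$ is an unsupported assumption, and the proof does not go through.

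A second, more localized gap: the paper's proof of Theorem \ref{EveryPrimTheorem} is not the same as the proof of Theorem \ref{ProgTheorem}; the construction a priori only puts \emph{absolutely primitive} lengths (in the sense of Section \ref{AbsPrimSection}) into progressions. To upgrade to \emph{all} primitive lengths, the paper needs Lemma \ref{absprimlemma} (every primitive $\gamma$ has eigenvalues that are $m$-th powers of those of an absolutely primitive $\mu$, so $\ell = j\,\ell_{abs}$ for some $j\in\N$) and then Claim \ref{denomclearclaim} (the constant $C$ in the progression $\{C(ai+b)\ell_{abs}\}$ can always be chosen divisible by $j$, which requires introducing additional commensurator elements $\zeta_{w_i}$ at places lying over the prime factors of $j$). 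Your proposal gestures at controlling primitivity via eigenvalue fields, which is the right intuition behind the absolutely-primitive notion, but it does not address the reduction from ``primitive'' to ``absolutely primitive'' or the denominator-clearing step that makes the progression land on integer multiples of the given $\ell$. These two omissions---the filtration mechanism and the absolute-primitivity reduction---are the essential content of the proof.
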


\noindent Recall that two manifolds $M$ and $M'$ are said to be \textbf{commensurable} if there exists some $X$ which is a finite sheeted cover of both $M$ and $M'$.
In \cite{LaMcR} it is shown that any orbifold commensurable to the modular curve also carries arbitrarily long arithmetic progressions in the primitive length spectrum.
Using their ideas, we prove this in the general case.

\begin{thm}\label{CommInvTheorem}
The property of having arbitrarily long arithmetic progressions in the primitive length spectrum and the property of having every primitive length occur in arbitrarily long arithmetic progressions in the primitive length spectrum are commensurability invariants.
That is to say, if $M$ is an arithmetic locally symmetric orbifold of classical type commensurable with $M'$, then $\mathcal{L}_p(M)$ has arbitrarily long arithmetic progressions if and only if $\mathcal{L}_p(M')$ has arbitrarily long arithmetic progressions.
Moreover every primitive length occurs in an arbitrarily long arithmetic progressions in $\mathcal{L}_p(M)$ if and only if the same property holds for $\mathcal{L}_p(M')$.
\end{thm}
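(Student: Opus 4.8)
The plan is to reduce the commensurability statement to a statement about arithmetic progressions of \emph{integer multiples} of a single primitive length, which is precisely the form in which Theorems~\ref{ProgTheorem} and~\ref{EveryPrimTheorem} are stated. First I would record the basic structural fact that if $M$ is an arithmetic locally symmetric orbifold of classical type and $M'$ is commensurable with $M$, then $M'$ is again arithmetic locally symmetric of classical type (commensurability preserves both the locally symmetric metric up to scaling and the arithmeticity of the lattice), so Theorems~\ref{ProgTheorem} and~\ref{EveryPrimTheorem} apply to \emph{both} $M$ and $M'$. Thus for the first assertion there is nothing to prove: both sides of the "if and only if" are unconditionally true. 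The content of the theorem is therefore really the \emph{second} assertion, about every primitive length occurring in arbitrarily long progressions, together with showing the proof is robust enough to not rely on the global theorems — i.e. giving a direct transfer argument, which is what \cite{LaMcR} do in the modular curve case. I would phrase the proof as a direct transfer so that it is logically independent and mirrors \cite{LaMcR}.

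The key geometric input is the commensurability comparison of length spectra. Let $X \to M$ and $X \to M'$ be the common finite cover. A closed geodesic $\gamma$ on $M$ lifts to $X$; some power $\gamma^m$ (with $m \le d$, $d = \deg(X/M)$) lifts to a \emph{closed} geodesic $\tilde\gamma$ on $X$, which then projects to a closed geodesic on $M'$ whose length is $m \cdot \ell(\gamma)$ divided by... more precisely, since the covering maps are local isometries, $\ell_X(\tilde\gamma) = m\,\ell_M(\gamma)$ and the projection to $M'$ of $\tilde\gamma$ has length $\ell_X(\tilde\gamma)/m' = (m/m')\,\ell_M(\gamma)$ for the corresponding period $m' \mid m$ on the $M'$ side. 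The clean statement I would extract and prove as a preliminary lemma is: there exist positive integers $p, q$ (depending only on the covering, in fact one may take $p = q = d!$ or the degree) such that $q\mathcal{L}_p(M) \subseteq \mathcal{L}_p(X) \supseteq$ and $p\,\ell \in \mathcal{L}(M')$ for every $\ell \in \mathcal{L}_p(M)$, and symmetrically with the roles reversed. In other words, multiplying lengths by a fixed integer moves one primitive length spectrum into the full length spectrum of the other; and since $\mathcal{L}(M')$ consists of integer multiples of primitive lengths of $M'$, every $\ell \in \mathcal{L}_p(M)$ satisfies $p\ell = n\ell'$ for some $\ell' \in \mathcal{L}_p(M')$ and some $n \in \N$.

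With this lemma in hand the transfer of the progression property is bookkeeping. Suppose every primitive length of $\mathcal{L}_p(M')$ occurs in arbitrarily long progressions of integer multiples. Given $\ell \in \mathcal{L}_p(M)$ and $k$, write $p\ell = n\ell'$ with $\ell' \in \mathcal{L}_p(M')$. Apply the hypothesis to $\ell'$ to get $\{(ai+b)\ell'\}_{i=1}^k \subseteq \mathcal{L}_p(M')$ with $\{ai+b\}\subset \N$; since $\mathcal{L}_p(M') \subseteq \mathcal{L}(M')$ and, by the lemma in the reverse direction, $q\mathcal{L}(M') \subseteq \mathcal{L}(M)$ for a fixed integer $q$, we get $\{qn(ai+b)\ell'\}_{i=1}^k = \{pq(ai+b)\ell\}_{i=1}^k \subseteq \mathcal{L}(M)$. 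Finally each element $pq(ai+b)\ell$ of $\mathcal{L}(M)$ is an integer multiple of some primitive length, hence $\mathcal{L}_p(M)$ itself contains a $k$-term progression of integer multiples of a primitive geodesic length — and one checks this progression can be arranged to be of the required form $\{(a'i + b')\ell\}$ by absorbing $pq$ into the common difference and offset. The same argument with $M$ and $M'$ swapped gives the converse, and the identical (simpler) argument without reference to a fixed $\ell$ gives the "arbitrarily long progressions somewhere in $\mathcal{L}_p$" version.

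The main obstacle is the careful analysis of \emph{primitivity} under covers: passing to a cover or quotient can change whether a geodesic is primitive (a primitive geodesic upstairs may be a proper power downstairs of a shorter one, and vice versa), so the clean containments must be stated for the \emph{full} length spectra $\mathcal{L}$, and then one must argue at the end that a $k$-term progression of \emph{lengths} (elements of $\mathcal{L}(M)$) automatically yields a $k$-term progression of \emph{integer multiples of primitive lengths}, which is immediate since every element of $\mathcal{L}(M)$ is by definition an integer multiple of an element of $\mathcal{L}_p(M)$ — but one must take care that the resulting progression is simultaneously a progression of multiples of a \emph{single} fixed $\ell$, which is why the scaling factors $p, q$ must be chosen uniformly (independent of the individual geodesic), and this uniformity is exactly what the bound $m \le \deg(X/M)$ on periods provides.
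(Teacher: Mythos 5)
Your high-level approach — transfer via the common finite cover $X$ and track how lengths scale under lifting and projecting — is the right one and mirrors the paper's strategy. But there is a genuine gap at the crucial step, and it is precisely the one you flag at the end without actually resolving.

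Your argument produces a set $\{pq(ai+b)\ell\}_{i=1}^{k} \subset \mathcal{L}(M)$, i.e.\ a set of lengths of (possibly imprimitive) closed geodesics on $M$. The theorem requires a progression $\{(a'i+b')\ell\}_{i=1}^{k} \subset \mathcal{L}_p(M)$, i.e.\ each term must be the length of a \emph{primitive} geodesic, and moreover a multiple of the fixed $\ell$. From $pq(ai+b)\ell \in \mathcal{L}(M)$ you only get $pq(ai+b)\ell = m_i\ell_i$ for some primitive $\ell_i \in \mathcal{L}_p(M)$ and some $m_i \in \N$, and both $m_i$ and $\ell_i$ depend on $i$. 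There is no reason $\ell_i = \ell$, nor that the $\ell_i$ agree with one another, nor that the resulting primitive lengths still lie in arithmetic progression. Your proposed fix — that the uniform bound on periods, say $p = q = d!$, makes the scaling uniform — conflates a uniform \emph{upper bound} on the period with the period being \emph{constant} across geodesics. Different primitive geodesics on $M$ lift to $X$ with different periods $m_i \mid d_M$, and similarly when projecting to $M'$: the actual multiplier $p_i/q_i$ genuinely varies with the geodesic. This cannot be absorbed into $a'$ and $b'$.

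The paper resolves exactly this by a combinatorial argument you do not have. It transfers \emph{specific} primitive geodesic representatives (not just lengths), tracks the resulting scaling factor $p_i/q_i$ with $p_i \mid d_M$, $q_i \mid d_{M'}$, so that there are at most $\sigma(d_M)\sigma(d_{M'})$ possible values, colors the index set $\{1,\dots,N\}$ by this value, and applies Van der Waerden's theorem: by taking the initial progression of length $N \ge W(\sigma(d_M)\sigma(d_{M'}),k)$, it extracts a monochromatic $k$-term subprogression, on which the scaling factor is constant, and only then do the transferred lengths remain in arithmetic progression with a common primitive base. The divisibility constant $D=\prod_{d\mid d_M}d\prod_{d'\mid d_{M'}}d'$ is then built into $C$ via Claim~\ref{denomclearclaim} so the final coefficient is an integer. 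This Van der Waerden step is the key idea missing from your proposal.

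One further caution: your opening observation that the first assertion is "trivially an iff" because Theorems~\ref{ProgTheorem} and~\ref{EveryPrimTheorem} apply to both $M$ and $M'$ is circular in the paper's logical structure — those theorems are proved in full generality only \emph{after} Theorem~\ref{CommInvTheorem}, by reducing an arbitrary arithmetic lattice to $G(\mathcal{O}_K)$ via commensurability. You partly acknowledge this by aiming for a self-contained transfer argument, which is the right instinct, but it makes the gap above fatal rather than cosmetic.
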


\noindent As a corollary of these theorems, we see that any orbifold which contains a totally geodesic embedding of an orbifold commensurable to an arithmetic locally symmetric space also has arbitrarily long arithmetic progressions in the primitive length spectrum.

\paragraph{\textbf{Acknowledgements.}}
The author would first and foremost like to give special thanks to his advisor, Ben McReynolds, for numerous hours of conversation throughout the various stages of completion of this project.
The author would additionally like to thank Britain Cox, Jean-Fran\c cois Lafont, and Abhishek Parab for helpful and stimulating conversations during the genesis of this project.


\section{Notation and Preliminaries}

\subsection{Notation}\label{NotationSection}

Throughout the entire text, $G$ will always denote a fixed connected, absolutely simple, adjoint algebraic group of classical type defined over a number field $K$.
For any finite place $v$ of $K$, we will use $G_v$ to denote the $K_v$ points of $G$, $f_v$ to denote the residue field of $K_v$, and $q_v$ to denote the cardinality of $f_v$.
We will constantly conflate the notion of $v$ as an element of $\mathcal{O}_K$, a prime ideal of $\mathcal{O}_K$, and as a valuation, however in what follows this will not cause any issue.
We may use Weil restriction to view $G$ as a $\Q$-algebraic group, from which we have the isomorphism $G(K)\cong \Res_{K/\Q}(G)(\Q)$.
Through this, we will use $\mathcal{G}$ to denote the $\R$ points of this restriction, $\Res_{K/\Q}(G)(\R)$, regarded as a real Lie group.
We will always assume that $\mathcal{G}$ has no compact or Euclidean factors and that $\Gamma$ is a lattice in $\mathcal{G}$.
Our lattices will always be irreducible.
For any $\Gamma<\mathcal{G}$ we define the commensurator as
$$\Comm(\Gamma)=\{g\in\mathcal{G}\mid \Gamma\cap g\Gamma g^{-1}<_{\text{f.i.}}\Gamma,~g\Gamma g^{-1}\},$$
where $<_{\text{f.i}}$ means is a subgroup of finite index.
We now list all simple real forms of classical type up to isogeny and their explicit matrix realizations, which we use in the sequel (\cite{PRBook} or \cite{Witte-Morris} for instance).
Let $M$ be the square matrix with $1$s on the anti-diagonal and $0$s everywhere else, namely
$$M=\begin{pmatrix}
&&1\\
&\iddots&\\
1&&
\end{pmatrix},$$
and let 
$$T_{p,q}=\begin{pmatrix}
\id_{q-p,0}&0\\
0&M
\end{pmatrix},$$
where $\id_{p,q}=\diag(1,\dots,1,-1,\dots,-1)$.
We then realize the $\R$-forms of $\mathcal{G}$ as follows.

\begin{itemize}[leftmargin=5.5mm]
\item[]\textbf{Type $^1A$.}
Then $\mathcal{G}$ is $\SL(m,\R)$, $\SL(m,\Hy)$, or $\SL(m,\C)$ considered as a real Lie group, where $\Hy$ are Hamilton's quaternions.
\item[]
\item[]\textbf{Type $^2A$.}
Then $\mathcal{G}=\SU(p,q)$ with $1\le p\le q$, which we write as $\{A\in\SL(n,\C)\mid A^*T_{p,q}A=T_{p,q}\}$, where $A^*$ is the conjugate transpose of $A$.
\item[]
\item[]\textbf{Type $B$ or $^1D$.}
Then $\mathcal{G}=\SO(p,q)$ or $\SO(m,\C)$ considered as a real Lie group, where $1\le p\le q$.
We write $\SO(p,q)=\{S\in\SL(n,\R)\mid A^TT_{p,q}A=T_{p,q}\}$.
\item[]
\item[]\textbf{Type $C$.}
Then $\mathcal{G}=\Sp(p,q)$, with $1\le p\le q$.
Hence $\mathcal{G}=\{S\in\SL(m,\R)\mid A^TR_{p,q}A=R_{p,q}\}$, where
$$R_{p,q}=
\begin{pmatrix}
0&I_{p,q}\\
-I_{p,q}&0
\end{pmatrix}.$$
\item[]
\item[]\textbf{Type $^2D$.}
Then $\mathcal{G}=\SO(m,\Hy)$, i.e. matrices with coefficients in $\Hy$ that preserve the non-degenerate symmetric form $M$.
\end{itemize}

\noindent Each of these groups embeds inside $\SL(n,\R)$ or $\SL(n,\C)$ for some $n$, which we fix now and through the rest of the paper.
When we refer to a root $\alpha$ or $\mathcal{G}$, we mean with respect to the decomposition afforded by the standard choice of maximal torus.
When we discuss ``the $\mathcal{O}_K$ points of $G$'' in what follows, we will mean the matrices with $\mathcal{O}_K$ entries under some fixed matrix realization of $G$ inside $\SL_n$ such that $\mathcal{G}$ splits as a product of simple factors with the above realizations.
By the definition of arithmeticity, our results will be independent of this choice of realization.


\subsection{Locally symmetric spaces and their geodesics}
Let $\mathcal{K}$ denote the maximal compact subgroup of $\mathcal{G}$, then the corresponding quotient $\mathcal{K}\!\setminus\!\mathcal{G}$ is a symmetric space.
Quotienting this space by a discrete subgroup $\Gamma$ of $\mathcal{G}$ gives a locally symmetric orbifold $M=\mathcal{K}\!\setminus\!\mathcal{G}/\Gamma$, which is a manifold when $\Gamma$ is torsion free.
When $\mathcal{G}$ is a classical group we say that the associated locally symmetric orbifold $\mathcal{K}\!\setminus\!\mathcal{G}/\Gamma$ is of \textbf{classical type} and when $\Gamma$ is an arithmetric lattice, we say that $\mathcal{K}\!\setminus\!\mathcal{G}/\Gamma$ is \textbf{arithmetic}.
We wish to study closed geodesics on such spaces, hence we briefly recall some facts about the dictionary between semisimple elements in the lattice $\Gamma$ and closed geodesics on $M$.
All of the relevant details can be found in \cite[Section 8]{PrasadRap} and \cite{Witte-Morris}.

Recall an element $\gamma\in\Gamma$ is semisimple if it is diagonalizable over $\C$.
Given a semisimple element $\gamma$, let $T$ be the maximal $\R$-torus containing $\gamma$.
Then we write $T$ as a direct product of $T_c$ and $T_i$ where $T_c$ is a maximal compact subgroup of $T$ and $T_i$ is a maximal $\R$-split torus in $T$.
We may then decompose $\gamma$ into elliptic and hyperbolic parts, i.e. $\gamma=\gamma_e\cdot\gamma_h$ where $\gamma_e\in T_c$ and $\gamma_i\in T_i$.
We call a semisimple element $\gamma=\gamma_{e}\cdot\gamma_h$ \textbf{hyperbolic} if $\gamma_h\neq1$.
We will use the notation $\Gamma^{hyp}$ to denote the set of all semisimple elements which are hyperbolic.
With this in mind, let $z\in\mathcal{G}$ be any element such that $zTz^{-1}$ is invariant under the Cartan involution associated to the decomposition of the Lie algebra, $\Fr{g}=\Fr{t}\oplus\Fr{p}$, afforded by $T$.
Then there is some $X\in\Fr{p}$ such that $\gamma_h=z^{-1}\exp(X)z$ and consequently
\begin{align*}
\varphi:\R&\to M\\
t&\mapsto \mathcal{K}\exp(tX)z,
\end{align*}
parametrizes a closed geodesic $c_\gamma$ on $M$ passing through the point $\mathcal{K}z$.
Thus every conjugacy class of hyperbolic element gives rise to a closed geodesic on $M$, which we denote $c_\gamma$ and refer to as the associated geodesic to $\gamma$.
Prasad--Rapinchuk have shown that this in fact characterizes all closed geodesics on $M$.

\begin{prop}[\cite{PrasadRap}, Proposition 8.2]
Every closed geodesic on $M$ is of the form $c_\gamma$ for some $\gamma\in\Gamma^{hyp}$.
Moreover, the length of $c_\gamma$ is given by
$$\leng(c_\gamma)=\frac{1}{n_\gamma}\sqrt{\sum_{\alpha\in\Phi(\mathcal{G},T)}(\log|\alpha(\gamma)|)^2},$$
where $\Phi(\mathcal{G},T)$ are the roots of $\mathcal{G}$ with respect to the torus $T$ and $n_\gamma$ is some natural number.
\end{prop}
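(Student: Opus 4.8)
The plan is to reduce the statement about closed geodesics on $M=\mathcal{K}\!\setminus\!\mathcal{G}/\Gamma$ to one about axial isometries of the symmetric space $X=\mathcal{K}\!\setminus\!\mathcal{G}$, which is a Hadamard manifold since $\mathcal{G}$ has no compact factors; arithmeticity of $\Gamma$ plays no role here, only that $\Gamma$ is a lattice. Recall that the unit-speed geodesics of $X$ through the basepoint $o=\mathcal{K}e$ are exactly the curves $t\mapsto\mathcal{K}\exp(tX)$ with $X\in\Fr{p}$ of norm $1$ in the metric induced by the Killing form on $\Fr{p}$, and every geodesic line of $X$ is a right $\mathcal{G}$-translate of such a curve. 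A closed geodesic of $M$ is then precisely the image of a geodesic line $\tilde c\subset X$ that is preserved, as a set, by some $\gamma\in\Gamma$ translating it by a nonzero amount --- equivalently, $\gamma$ is \emph{axial}, i.e.\ its displacement function $p\mapsto d(p,\gamma p)$ attains a positive minimum, and $\tilde c$ is an axis of $\gamma$. So the first task is to identify the axial elements of $\Gamma$.

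First I claim the axial elements of $\Gamma$ are exactly $\Gamma^{hyp}$. Given $\gamma\in\Gamma^{hyp}$, the construction recalled in the text furnishes $z\in\mathcal{G}$ and $X\in\Fr{p}$ with $X\neq0$ and $\gamma_h=z^{-1}\exp(X)z$; putting $\tilde c(t)=\mathcal{K}\exp(tX)z$ and working inside the maximal flat $\exp(\Fr{a})\cdot o$ with $\Fr{a}\subset\Fr{p}$ a maximal abelian subspace containing $X$, the standard convexity properties of the displacement function on a Hadamard manifold show that $d(p,\gamma p)$ is minimized exactly along the geodesics parallel to $\tilde c$, with minimal value $\norm{X}>0$. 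Hence $\gamma$ is axial with axis $\tilde c$, which yields $c_\gamma$, and the translation length of $\gamma$ along $\tilde c$ is $\norm{X}$. Conversely, let $\gamma\in\Gamma$ be axial. In the Jordan decomposition $\gamma=\gamma_s\gamma_u$ a nontrivial unipotent part $\gamma_u$ would make $\gamma$ parabolic (the standard fact that a nontrivial unipotent of $\mathcal{G}$ has infimal displacement not attained, or attained only together with nontrivial unipotent motion), contradicting axiality; so $\gamma=\gamma_s$ is semisimple. If in addition $\gamma_h=1$, then $\gamma$ is elliptic and fixes a point of $X$, again contradicting axiality. Thus $\gamma\in\Gamma^{hyp}$ and its geodesic is the $c_\gamma$ constructed above; this is the first assertion. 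I would invoke the structure theory of isometries of symmetric spaces of noncompact type, as in \cite{Witte-Morris} and \cite[\S8]{PrasadRap}, for the parabolic/elliptic facts rather than reprove them.

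To compute $\leng(c_\gamma)$, fix the axis $\tilde c$ of $\gamma\in\Gamma^{hyp}$ and let $\Gamma_{\tilde c}=\{\delta\in\Gamma:\delta\tilde c=\tilde c\}$ be its setwise stabilizer. Restriction to $\tilde c\cong\R$ gives a homomorphism $\Gamma_{\tilde c}\to\mathrm{Isom}(\R)$ whose image is discrete (properness of the $\Gamma$-action on $X$) and infinite (it contains the translation by $\norm{X}$), hence is generated by a translation of length $\ell_0>0$, possibly together with a point reflection arising from an order-two elliptic element; in either case the image of $\tilde c$ in $M$, which is $c_\gamma$, has Riemannian length $\ell_0$ or $\ell_0/2$. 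Since $\gamma$ itself maps to the translation by $\norm{X}$, we get $\norm{X}=n_\gamma\leng(c_\gamma)$ for a positive integer $n_\gamma$, so $\leng(c_\gamma)=\norm{X}/n_\gamma$. It remains to evaluate $\norm{X}$. The hyperbolic part $\gamma_h$ lies in the split torus $T_i\subset T$ and has positive eigenvalues, so $\log\gamma_h\in\mathrm{Lie}(T_i)$ is defined, equals $\mathrm{Ad}(z^{-1})X$, and lies in the Lie algebra of the maximal torus $T$; moreover $\abs{\alpha(\gamma)}=\abs{\alpha(\gamma_h)}=e^{\,d\alpha(\log\gamma_h)}$ for each $\alpha\in\Phi(\mathcal{G},T)$, the exponent being real because characters of the split torus $T_i$ take real values. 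Using the identity $B(H,H)=\sum_{\alpha\in\Phi(\mathcal{G},T)}d\alpha(H)^2$ for the Killing form on the Lie algebra of $T$ together with the $\mathrm{Ad}$-invariance of $B$, we conclude
$$\norm{X}^2=B(X,X)=B(\log\gamma_h,\log\gamma_h)=\sum_{\alpha\in\Phi(\mathcal{G},T)}d\alpha(\log\gamma_h)^2=\sum_{\alpha\in\Phi(\mathcal{G},T)}\bigl(\log\abs{\alpha(\gamma)}\bigr)^2,$$
which combined with $\leng(c_\gamma)=\norm{X}/n_\gamma$ gives the stated formula; for the explicit classical realizations of Section~\ref{NotationSection} one checks by a direct computation on diagonal elements that this Killing-form normalization is the intended metric.

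I expect the main obstacle to be the first step --- isolating exactly which elements of $\Gamma$ give rise to closed geodesics, i.e.\ ruling out parabolic (mixed or unipotent) isometries and pinning the condition down to ``semisimple with nontrivial split part'' --- together with the bookkeeping concealed in $n_\gamma$, since $\leng(c_\gamma)$ must be read as the Riemannian length of the underlying (possibly mirrored) geodesic circle rather than of the parametrized, possibly multiply-wound, loop $\varphi$.
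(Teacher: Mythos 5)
The statement you were asked to prove is quoted in the paper as \cite[Proposition~8.2]{PrasadRap}; the paper supplies no proof of its own and simply invokes the result. So there is no in-text argument to compare against --- what you have produced is a reconstruction of the argument in Prasad--Rapinchuk, and your outline follows the same route that source takes: identify closed geodesics on $M$ with axes of axial isometries in the Hadamard symmetric space, show axial elements of the lattice are exactly the hyperbolic semisimple ones (ruling out parabolic and elliptic types via Jordan decomposition), introduce $n_\gamma$ by comparing the translation length of $\gamma$ to the length of the underlying geodesic circle through the setwise stabilizer $\Gamma_{\tilde c}$, and finally compute the translation length via the Killing form identity $B(H,H)=\sum_{\alpha\in\Phi}d\alpha(H)^2$ on a Cartan subalgebra. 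This is correct in structure and the individual steps are standard.

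Two places warrant more care if this were to be written out in full. First, the passage from $\norm{X}$ to $\sum_\alpha(\log|\alpha(\gamma)|)^2$ quietly uses that the Riemannian metric on $\mathcal{K}\!\setminus\!\mathcal{G}$ is induced by the Killing form of $\Fr{g}$ with \emph{no} rescaling; you acknowledge this by appeal to ``a direct computation on diagonal elements,'' but Prasad--Rapinchuk build this normalization into the statement, so one should say explicitly that the metric is chosen so that $\norm{\cdot}^2=B(\cdot,\cdot)$ on $\Fr{p}$ (any positive multiple would only rescale the length spectrum and would still give the proposition up to a global constant). Second, the reality of $d\alpha(\log\gamma_h)$ deserves a sentence: $\gamma=\gamma_e\gamma_h$ with $\gamma_e$ in the compact part $T_c$ forces $|\alpha(\gamma_e)|=1$, so $|\alpha(\gamma)|=\alpha(\gamma_h)$, and $\alpha$ restricted to the $\R$-split torus $T_i$ takes positive real values, which is what makes $\log|\alpha(\gamma)|=d\alpha(\log\gamma_h)$ a genuine real number and makes the sum over $\Phi(\mathcal{G},T)$ (taken in the complexified Lie algebra, where root spaces are one-dimensional and the Killing-form identity has no multiplicity factors) well-posed. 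Neither of these is a gap in the idea, but both are the points where the Killing-form bookkeeping can silently go wrong.

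Finally, a small bookkeeping note on $n_\gamma$: your argument gives $\norm{X}=m\ell_0$ with $m\ge 1$ the multiple of the generator of the translation subgroup and $\leng(c_\gamma)\in\{\ell_0,\ell_0/2\}$ depending on whether $\Gamma_{\tilde c}$ contains a reflection, so $n_\gamma\in\{m,2m\}$; it is worth recording that $n_\gamma$ is therefore well-defined and $\ge 1$, and equals $1$ exactly when $\gamma$ generates the translation part of $\Gamma_{\tilde c}$ and $c_\gamma$ is not folded --- this is the bookkeeping you flag at the end, and it is indeed the content of the factor $1/n_\gamma$ in the displayed formula.
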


\begin{cor}
There is a one to one correspondence between (primitive) closed geodesics on $M$ and (primitive) conjugacy classes $[\gamma]_\Gamma$, where $\gamma\in\Gamma^{hyp}$.
Additionally for any hyperbolic $\gamma$, $\leng(c_{\gamma^n})=n\leng(c_\gamma)$.
\end{cor}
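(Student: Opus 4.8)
The plan is to prove the two assertions of the Corollary by unwinding the Proposition and the standard dictionary between hyperbolic conjugacy classes and closed geodesics. For the first assertion, the map $\gamma\mapsto c_\gamma$ is surjective onto closed geodesics by the Proposition, so it remains to check that $c_\gamma$ depends only on the $\Gamma$-conjugacy class of $\gamma$ and that distinct conjugacy classes of hyperbolic elements give geometrically distinct geodesics. First I would observe that if $\gamma'=\delta\gamma\delta^{-1}$ with $\delta\in\Gamma$, then conjugation by $\delta$ intertwines the torus $T$ containing $\gamma$ with the torus $\delta T\delta^{-1}$ containing $\gamma'$ and the vector $X\in\Fr p$ with $\Ad(\delta)X$, so the two parametrizations $t\mapsto\mathcal{K}\exp(tX)z$ and $t\mapsto\mathcal{K}\exp(t\Ad(\delta)X)(z\delta^{-1})$ descend to the same curve on $M=\mathcal K\backslash\mathcal G/\Gamma$; hence $c_\gamma$ is well defined on conjugacy classes. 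Conversely, a closed geodesic on $M$ lifts to a geodesic line in the symmetric space $\mathcal K\backslash\mathcal G$ invariant under a unique maximal cyclic-by-elliptic subgroup of $\Gamma$ translating along it, and the generator of the translation part of that stabilizer recovers the conjugacy class $[\gamma]_\Gamma$; injectivity then follows from the fact that the axis together with the translation length and direction determine the hyperbolic part $\gamma_h$ up to conjugacy, and semisimplicity pins down $\gamma$ itself within its class. This restricts to a bijection on the primitive sublocus because primitivity of the geodesic is equivalent to the corresponding conjugacy class being primitive, i.e. not a proper power in $\Gamma$.

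For the length formula $\leng(c_{\gamma^n})=n\,\leng(c_\gamma)$, I would argue as follows. If $\gamma$ is hyperbolic with hyperbolic part $\gamma_h=z^{-1}\exp(X)z$, then $\gamma^n$ has hyperbolic part $\gamma_h^n=z^{-1}\exp(nX)z$ (the elliptic and hyperbolic parts commute, so powers distribute), and the associated geodesic $c_{\gamma^n}$ is parametrized by $t\mapsto\mathcal K\exp(t\,nX)z$, which traces the same geodesic as $c_\gamma$ but wraps around $n$ times; geometrically $c_{\gamma^n}$ is the $n$-th iterate of $c_\gamma$, so its length is $n$ times that of $c_\gamma$. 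Alternatively, and more in keeping with the Proposition's formula, one checks that $|\alpha(\gamma^n)|=|\alpha(\gamma)|^n$ for every root $\alpha\in\Phi(\mathcal G,T)$ (using that $\gamma$ and $\gamma^n$ lie in the same maximal torus $T$), so $\sqrt{\sum_\alpha(\log|\alpha(\gamma^n)|)^2}=n\sqrt{\sum_\alpha(\log|\alpha(\gamma)|)^2}$, and one verifies that the integer $n_{\gamma^n}$ equals $n_\gamma$ — this last point requires knowing that $n_\gamma$ measures the failure of $\gamma_h$ itself to be primitive as an element of the split torus (equivalently, how many times the geodesic generated by the full $\Gamma$-stabilizer of the axis is traversed), which is unchanged under the identification of the axes of $\gamma$ and $\gamma^n$.

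The main obstacle is the injectivity half of the correspondence: one must be careful that two hyperbolic elements $\gamma_1,\gamma_2$ producing the same geodesic on $M$ are genuinely $\Gamma$-conjugate and not merely conjugate in $\mathcal G$. This is where one invokes that the geodesic determines its axis in $\mathcal K\backslash\mathcal G$ up to the $\Gamma$-action, and that the subgroup of $\Gamma$ stabilizing a given axis and translating along it is infinite cyclic up to the finite elliptic part (this uses discreteness of $\Gamma$ and that $\mathcal G$ has no compact or Euclidean factors, so the geodesic is a genuine line with a well-defined translation direction); comparing generators of these stabilizers across different lifts of the same closed geodesic then forces $\Gamma$-conjugacy. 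The bookkeeping around the elliptic part $\gamma_e$ and the constant $n_\gamma$ — both of which are invisible to the naive length-vector $(\log|\alpha(\gamma)|)_\alpha$ — is the only genuinely delicate point; everything else is a direct consequence of the Proposition and the explicit parametrization $\varphi$ recalled above.
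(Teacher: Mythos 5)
The paper does not actually prove this corollary: it is stated without argument, as a standard consequence of Proposition 8.2 of Prasad--Rapinchuk \cite{PrasadRap} and the dictionary between hyperbolic elements and geodesics recalled just before it. So your proposal is filling in a proof the paper treats as folklore. That said, there are three concrete problems with your write-up.

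First, the well-definedness check is miscalibrated. You claim the two parametrizations $t\mapsto\mathcal{K}\exp(tX)z$ and $t\mapsto\mathcal{K}\exp(t\Ad(\delta)X)(z\delta^{-1})$ descend to the same curve, but they do not: $\exp(t\Ad(\delta)X)\,z\delta^{-1}=\delta\exp(tX)\delta^{-1}z\delta^{-1}$, and the factor needed to relate this to $\exp(tX)z$ by right $\Gamma$-multiplication depends on $t$. The clean statement is simpler: if $\gamma'=\delta\gamma\delta^{-1}$ then $\gamma'_h=(z\delta^{-1})^{-1}\exp(X)(z\delta^{-1})$ with the \emph{same} $X$, so one takes $z'=z\delta^{-1}$ and keeps $X$ fixed; then $\mathcal{K}\exp(tX)z'=\mathcal{K}\exp(tX)z\cdot\delta^{-1}$, which is the same point of $M=\mathcal{K}\backslash\mathcal{G}/\Gamma$.

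Second, your argument that $n_{\gamma^n}=n_\gamma$ is internally inconsistent. You offer two characterizations of $n_\gamma$ as though equivalent: (a) the failure of $\gamma_h$ to be primitive in the split torus, and (b) the number of traversals of the geometric geodesic under the parametrization. But under $\gamma\mapsto\gamma^n$, quantity (b) is multiplied by $n$, which would make the length formula yield $\leng(c_{\gamma^n})=\leng(c_\gamma)$, the opposite of what you want; whereas the corollary requires $n_{\gamma^n}=n_\gamma$, i.e. $n_\gamma$ must be an invariant of the underlying primitive root (or the geometric axis) alone. You need to pin down the actual definition of $n_\gamma$ from \cite[\S8]{PrasadRap} and verify directly that it is unchanged when $\gamma$ is replaced by $\gamma^n$; the heuristic as written would deduce the wrong scaling.

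Third, the injectivity step is the genuinely hard part and your sketch doesn't close it. In a higher-rank symmetric space the lift of a closed geodesic is a line that may lie in many flats, and the $\Gamma$-stabilizer of that line is not a priori ``cyclic-by-elliptic'': one must first argue (using discreteness and the absence of compact/Euclidean factors) that the elements of $\Gamma$ translating along the line form a discrete, hence cyclic, group, and separately control the elements that preserve the line without translating. Recovering the full conjugacy class $[\gamma]_\Gamma$ from the geodesic also has to account for the elliptic factor $\gamma_e$, which the axis alone does not see. None of this is fatal, but it is not a ``direct consequence of the Proposition''; it is exactly the content one would either cite from \cite{PrasadRap} or work out carefully, and your proposal asserts it rather than proving it.
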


\noindent Hence the study of primitive geodesics and their lengths can be reduced to the study of the associated conjugacy classes of primitive hyperbolic elements in $\Gamma$, a dictionary we will frequently exploit.


\subsection{Bruhat--Tits theory}
We also require some results from Bruhat--Tits theory, all of the details of which can be found in \cite{BT1}, \cite{BT2}, and \cite{Tits}.
Given $G_v$ over the local field $K_v$ and a fixed maximal $K_v$-split torus $T_v$, we can form the associated apartment $\mathcal{A}_v=\mathcal{A}(G_v,K_v,T_v)$, defined as the affine space under $X_*(T_v)\otimes\R$, where $X_*(T_v)$ denotes the cocharacters of $T_v$.
This apartment carries an action by the normalizer of the torus, $N_{G_v}(T_v)$.
To upgrade this action to the full group $G_v$, we define the building $\mathcal{B}(G_v,K_v)=G_v\times\mathcal{A}_v/\!\sim$, where $\sim$ is a suitable equivalence relation so that the left action of $G_v$ is compatible with the action of $N_{G_v}(T_v)$ on $\mathcal{A}_v$.
Throughout the text we will abusively use the shorthand $\mathcal{B}_v$ when possible.

The building $\mathcal{B}_v$ carries a chamber structure, dictated by the vanishing sets of affine linear functionals.
Given a facet $F$ in $\mathcal{B}_v$, the stabilizer of $F$ in $G_v$ gives rise to a \textbf{parahoric} subgroup of $G_v$, which we denote $P_F$.
In particular, for any point $x\in\mathcal{B}_v$ if $F$ is the minimal facet containing $x$, then $P_x=P_F$.
To each parahoric $P_x$, Bruhat--Tits theory associates a smooth affine group scheme $\mathcal{G}_x$ over $\mathcal{O}_{K_v}$ such that the generic fiber $\mathcal{G}_x\times_{\mathcal{O}_{K_v}}K_v$ is $G_v$ and such that the $\mathcal{O}_{K_v}$ points give $P_x$.
In the sequel we refer to $\mathcal{G}_x$ as the \textbf{group scheme associated to the parahoric} $P_x$.

We may additionally reduce $\mathcal{G}_x$ over the residue field $f_v$, which we denote $\overline{\mathcal{G}_x}$.
More precisely, $\overline{\mathcal{G}_x}=\mathcal{G}_x\times_{\mathcal{O}_{K_v}}f_v$ and the corresponding map
$$P_x=\mathcal{G}_x(\mathcal{O}_{K_v})\to \overline{\mathcal{G}_x},$$
is surjective.
$\overline{\mathcal{G}_x}$ admits a Levi decomposition as $\overline{\mathcal{G}_x}=\overline{\mathcal{G}_x^{red}}~\overline{\mathcal{R}_u(\mathcal{G})}$, where $\overline{\mathcal{G}^{red}_x}$ is reductive and $\overline{\mathcal{R}_u(\mathcal{G})}$ is the unipotent radical.
Taking the quotient of $\overline{\mathcal{G}_x}$ by the unipotent radical gives a reductive algebraic group, the structure of which is dictated by Bruhat--Tits theory \cite[3.5.2]{Tits}.


\subsection{Combinatorial theorems}

Finally we require two combinatorial flavored theorems, a theorem of Green--Tao on progressions in the prime numbers and a theorem of Van der Waerden on progressions in the coloring of consecutive natural numbers.
The former is a result of Green--Tao in which they show that any positive upper density set of the prime numbers has arbitrarily long arithmetic progressions in the natural numbers.

\begin{thm}[\cite{GT}, Theorem 1.2]
Let $\pi(n)$ be the function that gives the cardinality of the set of prime numbers between $1$ and $n$.
If $A$ is a subset of the prime numbers such that
$$\limsup_{n\to\infty}\frac{\left|A\cap[1,n]\right|}{\pi(n)}>0,$$
then $A$ has arbitrarily long arithmetic progressions.
\end{thm}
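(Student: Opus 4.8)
This is the celebrated theorem of Green and Tao, and its proof lies well beyond the scope of the present article; we invoke it as a black box and refer the reader to \cite{GT} for the complete argument. For orientation we only indicate the shape of the proof. The plan is to reduce the statement to Szemer\'edi's theorem --- every subset of $\N$ of positive upper density contains arbitrarily long arithmetic progressions --- by means of a \emph{transference principle}: one upgrades Szemer\'edi's theorem to sets that have positive density not inside all of $\N$, but only relative to a sufficiently pseudorandom ambient set. Since $A$ sits inside the primes, and the primes can be captured by such a pseudorandom majorant, this suffices.

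First one performs the ``$W$-trick'': because the primes are badly equidistributed modulo small integers, one fixes $W=\prod_{p\le w}p$ for a slowly growing parameter $w$ and passes to a residue class $\{Wn+b\}$ with $\gcd(b,W)=1$, on which the restricted primes behave like a set of density bounded below inside $\Z/W\Z$-adjusted windows. One then constructs a measure $\nu\colon\Z/N\Z\to\R$, nonnegative and of average $1+o(1)$, majorizing a normalization of the indicator function of these restricted primes; the construction is a truncated divisor sum in the spirit of the Goldston--Y{\i}ld{\i}r{\i}m sieve, essentially a scaled version of $\bigl(\sum_{d\mid Wn+b,\ d\le R}\mu(d)\log(R/d)\bigr)^2$.

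The crux --- and the main technical obstacle --- is verifying that $\nu$ satisfies the \emph{linear forms condition} and the \emph{correlation condition} of \cite{GT}: correlations of bounded families of translates and dilates of $\nu$ along affine-linear forms must factor asymptotically as if $\nu$ were the constant function $1$. This is precisely where the Goldston--Y{\i}ld{\i}r{\i}m divisor-sum estimates are needed. Granting this, one invokes the relative Szemer\'edi theorem: any $A$ with positive density relative to $\operatorname{supp}\nu$ contains long progressions. Its proof runs through the Gowers uniformity norms --- a \emph{generalized von Neumann theorem} shows the weighted count of $k$-term progressions is controlled by a Gowers $U^{k-1}$-norm, and a Koopman--von Neumann type decomposition splits the balanced function of $A$ into a bounded ``structured'' part, handled by ordinary Szemer\'edi, plus a Gowers-uniform part contributing negligibly. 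Undoing the $W$-trick then produces arbitrarily long arithmetic progressions inside the original set $A$ of primes.
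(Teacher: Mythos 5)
The paper does not prove this statement: it is cited verbatim from Green--Tao \cite{GT} as a black box, which is exactly what you do. Your additional sketch of the $W$-trick, the Goldston--Y{\i}ld{\i}r{\i}m majorant, the linear forms and correlation conditions, and the transference to a relative Szemer\'edi theorem is an accurate summary of the argument in \cite{GT}, so the proposal is correct and consistent with the paper's use of the result.
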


\noindent The second theorem we will require is a result of Van der Waerden, which involves finding $k$-term arithmetic progressions in an $r$ coloring of the set $\{1,...,N\}$.
More specifically, Van der Waerden proved the following.

\begin{thm}[\cite{VDW}]\label{VDWThm}
Let $k,r$ be fixed natural numbers.
Then there exists an $N$ such that any $r$ coloring of the set $\{1,2,\dots,N\}$ has a monochromatic $k$-term progression.
\end{thm}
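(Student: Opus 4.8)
The plan is a double induction, the standard argument for this theorem. Write $W(k,r)$ for the least $N$ that works, if it exists; the goal is $W(k,r)<\infty$ for all $k,r$, and I would induct on $k$ with the hypothesis ``$W(k,r)<\infty$ for every $r$.'' The base case $k=2$ is pigeonhole: among $r+1$ integers colored with $r$ colors two share a color and form a monochromatic $2$-term progression, so $W(2,r)\le r+1$.

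The key point is that a bare induction on $k$ does not close, so I would strengthen the statement. Call monochromatic $k$-term progressions $A_1,\dots,A_s$, with $A_i=\{a_i,a_i+d_i,\dots,a_i+(k-1)d_i\}$, \emph{color-focused at $f$} if their colors are pairwise distinct and $a_i+kd_i=f$ for all $i$. Assuming $W(k,\cdot)<\infty$, I would prove by an inner induction on $s$ that for each $1\le s\le r$ there is $n(s)$ so that every $r$-coloring of $[1,n(s)]$ contains a monochromatic $(k+1)$-term progression or $s$ monochromatic $k$-term progressions color-focused at a common point of $[1,n(s)]$. Granting this, $s=r$ closes the induction: $r$ color-focused progressions exhaust all $r$ colors, so the focus point has the color of one of them and extends it to a monochromatic $(k+1)$-term progression, giving $W(k+1,r)\le n(r)<\infty$.

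For the inner induction, $s=1$ is handled by $n(1)=2W(k,r)$, since a monochromatic $k$-term progression inside $[1,W(k,r)]$ has difference at most $W(k,r)$, so its focus point lies in $[1,2W(k,r)]$ (and if that point repeats the color we are already done). For $s\to s+1$, set $q=r^{n(s)}$, let $m=W(k,q)$, put $n(s+1)=2mn(s)$, and cut $[1,2mn(s)]$ into $2m$ consecutive length-$n(s)$ blocks, coloring each block by its pattern, an element of a $q$-element set. Applying $W(k,q)$ to the first $m$ blocks produces $k$ blocks whose indices form an arithmetic progression and which share a common pattern, together with a ``focus block'' whose index still lies in $\{1,\dots,2m\}$ — this is exactly what the factor $2$ buys. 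Inside the first of these $k$ blocks the inner hypothesis yields either a monochromatic $(k+1)$-term progression, or $s$ progressions $A_1,\dots,A_s$ color-focused at some point $f$. Since the other $k-1$ chosen blocks carry the same pattern, translating $A_i$ ``diagonally'' — keeping its starting point but replacing the difference $d_i$ by $d_i+L$, where $L$ is the integer distance between corresponding positions in consecutive blocks of the block-progression — yields a monochromatic $k$-term progression of $A_i$'s color whose focus is the single point $g=f+kL$ in the focus block; and the ``vertical'' progression $\{f,f+L,\dots,f+(k-1)L\}$ is monochromatic in the color $c_0$ of $f$ and is also focused at $g$. If $c_0$ coincides with the color of one of the diagonal progressions we obtain a monochromatic $(k+1)$-term progression; otherwise the vertical progression is the needed $(s+1)$-st color-focused progression.

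The step I expect to be the real obstacle is not any single estimate but finding the right strengthened statement, the color-focusing lemma, since the plain induction on $k$ fails; once it is in place the remaining work is bookkeeping — checking that every auxiliary point (focus points of sub-progressions, the focus block, the diagonal and vertical progressions) stays inside $[1,n(s+1)]$ and that sharing a block pattern genuinely transports colors. The bound on $N$ produced this way grows like a tower of exponentials and is of no independent interest here; only finiteness of $W(k,r)$ is needed for the applications in this paper.
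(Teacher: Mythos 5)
Your proof is correct; it is the standard color-focusing double induction (the argument usually attributed to Graham--Rothschild--Spencer, which streamlines Van der Waerden's original). The bookkeeping all checks out: the base case $k=2$ is pigeonhole; the inner base $n(1)=2W(k,r)$ places the focus inside the interval because the common difference is at most $W(k,r)$; and in the inner step the factor $2$ in $n(s+1)=2mn(s)$ correctly guarantees that the focus block index $i_1+ke\le 2m$, so the common focus $g=f+kL$ (with $L=en(s)$) stays inside $[1,n(s+1)]$. The diagonal shift $d_i\mapsto d_i+L$ preserves colors because the $k$ chosen blocks carry the same pattern, and the vertical progression $\{f,f+jL\}_{j=0}^{k-1}$ either closes a $(k+1)$-term monochromatic progression or supplies the $(s+1)$-st color-focused progression.

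One point of comparison that you should be aware of: the paper does not prove Theorem \ref{VDWThm} at all. It is stated as a cited black box from \cite{VDW} and invoked only in the proof of Theorem \ref{CommInvTheorem}, where $W(\sigma(d_M)\sigma(d_{M'}),k)$ is used to find a monochromatic sub-progression after transporting lengths between commensurable orbifolds. So there is no proof in the paper to compare against; your write-up supplies a correct proof of the cited result.
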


\noindent For any choice of $k,r$ in Theorem \ref{VDWThm}, the minimal $N$ such that this theorem holds will be denoted $W(r,k)$ and called a \textbf{Van der Waerden number}.
For ease of dialogue we will refer to these theorems in the sequel as the Green--Tao theorem and Van der Waerden's theorem, respectively.


\subsection{Structure of the paper.}

\noindent We give a brief roadmap for the proofs of Theorems \ref{ProgTheorem}, \ref{EveryPrimTheorem}, and \ref{CommInvTheorem}.
We first prove Theorems \ref{ProgTheorem} and \ref{EveryPrimTheorem} in the specific case when $\Gamma$ is the $\mathcal{O}_K$ points of $G$.
To accomplish this, we first define a class of hyperbolic elements called absolutely primitive elements in Section \ref{AbsPrimSection}, which will be instrumental in controlling the primitivity of the progressions we construct.
In Section \ref{primelengthsection}, we then use Bruhat--Tits theory to construct filtrations of subgroups in the lattice $\Gamma$.
Using these subgroups, we show that upon conjugating the absolutely primitive elements by suitable elements in the commensurator, we can control the power of the conjugated element which returns it to the lattice.
Furthermore, these elements will remain primitive after conjugation.
In Section \ref{ProgTheoremSection}, we will then use this and the conversion to geodesic lengths in the primitive length spectrum to prove Theorem \ref{ProgTheorem} for $\Gamma=G(\mathcal{O}_K)$.
In Section \ref{OtherThmSection}, we use a slight modification of the proof of Theorem \ref{ProgTheorem} combined with Lemma \ref{absprimlemma} to prove Theorem \ref{EveryPrimTheorem} for $\Gamma=G(\mathcal{O}_K)$.
We then prove Theorem \ref{CommInvTheorem} for lattices which are commensurable to this $\Gamma$.
By the definition of arithmeticity, this will complete the proofs of Theorems \ref{ProgTheorem} and \ref{EveryPrimTheorem} for all arithmetic locally symmetric spaces of classical type.


\section{Absolutely primitive elements}\label{AbsPrimSection}
\noindent In this section we introduce the notion of absolutely primitive elements of $\Gamma^{hyp}$.
Such elements will be an indispensable tool for guaranteeing primitivity in our construction of arithmetic progressions in the length spectrum.
In short, the definition of an absolutely primitive element is meant to encode when a hyperbolic element of $\Gamma$ has minimal possible eigenvalues in the sense that no other member of $\Gamma$ has eigenvalues which are $n$th roots of them.
Throughout the remainder, we use $p_\gamma(z)$ to denote the characteristic polynomial of $\gamma\in\Gamma$ and $\lambda_{\gamma,1},\dots,\lambda_{\gamma,n}$ to denote its eigenvalues.
Define $Q(x_1,\dots,x_n)$ by the following formula
$$Q(x_1,\dots,x_n)=\sum_{k=0}^n(-1)^{k}\Bigg(\sum_{\substack{i_j\in\{1,\dots,n\}\\ i_1<\dots<i_k}}x_{i_1}\dots x_{i_k}\Bigg) z^{n-k},$$
and notice that 
$$p_\gamma(z)=Q(\lambda_{\gamma,1},\lambda_{\gamma,2},\dots,\lambda_{\gamma,n})\in\mathcal{O}_K[z].$$
If $\mathcal{P}$ denotes the set of all characteristic polynomials of elements of $\Gamma^{hyp}$, then we say that any $\gamma\in\Gamma^{hyp}$ is an \textbf{absolutely primitive element} if it satisfies the condition that
$$\max\left\{k\in\N\mid Q(\lambda^{1/k}_{\gamma,1},\lambda^{1/k}_{\gamma,2},\dots,\lambda^{1/k}_{\gamma,n})\in\mathcal{P}\right\}=1.$$
Using the correspondence between hyperbolic elements and closed geodesics, we say that $c_\gamma$ is an absolutely primitive geodesic if $\gamma$ is absolutely primitive.
With this terminology in mind, we have the following lemma.

\begin{lemma}\label{absprimlemma}
Absolutely primitive elements are primitive in $\Gamma$.
Moreover, for any $\gamma\in\Gamma^{hyp}$, there exists an absolutely primitive element $\mu$ and an $m\in\N$ such that $\lambda_{\gamma,i} =\lambda_{\mu,i}^m$ for all $i$.
\end{lemma}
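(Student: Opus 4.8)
The plan is to prove both assertions by analyzing the monoid structure of characteristic polynomials under the operation of taking roots of eigenvalues. First I would establish the easy direction: if $\gamma$ is absolutely primitive but not primitive in $\Gamma$, then $\gamma = \mu^m$ for some $\mu \in \Gamma^{hyp}$ and some $m \geq 2$; but then $\lambda_{\gamma,i} = \lambda_{\mu,i}^m$ for all $i$ (after suitably ordering eigenvalues), so $Q(\lambda_{\gamma,1}^{1/m}, \dots, \lambda_{\gamma,n}^{1/m}) = p_\mu(z) \in \mathcal{P}$, contradicting that the maximal such $k$ equals $1$. One subtlety here is matching up the eigenvalues: taking $m$-th powers of eigenvalues is exactly what powering $\mu$ does to its characteristic polynomial, since $p_{\mu^m}$ has roots $\lambda_{\mu,i}^m$; and conversely $Q$ evaluated at a choice of $m$-th roots that actually comes from an element of $\Gamma$ is precisely the characteristic polynomial of that element. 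I would make this bookkeeping precise by noting $Q(x_1,\dots,x_n) = \prod_{i}(z - x_i)$ is just the elementary-symmetric-function encoding, so $Q(\lambda_{\gamma,1}^{1/k},\dots)$ being in $\mathcal{P}$ means there is an element $\nu \in \Gamma^{hyp}$ whose eigenvalues are a consistent choice of $k$-th roots of the $\lambda_{\gamma,i}$, and then $\nu^k$ has the same eigenvalues as $\gamma$.

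For the second assertion, the plan is to show the ``root-taking'' process terminates. Given $\gamma \in \Gamma^{hyp}$, consider the set
$$D(\gamma) = \left\{ k \in \N \;:\; Q(\lambda_{\gamma,1}^{1/k}, \dots, \lambda_{\gamma,n}^{1/k}) \in \mathcal{P} \text{ for some consistent choice of roots} \right\}.$$
This set is nonempty (it contains $1$) and I claim it is bounded. The key point is that if $k \in D(\gamma)$, witnessed by $\nu_k \in \Gamma^{hyp}$, then the hyperbolic part of $\nu_k$ has translation length $\leng(c_{\nu_k})$ essentially equal to $\frac{1}{k}\leng(c_\gamma)$ up to the factor $n_\gamma$ — more robustly, $|\log|\alpha(\nu_k)||$ scales like $1/k$. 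But $\Gamma$ is a lattice, hence discrete, so there is a positive lower bound on the translation lengths (equivalently on $\sum_\alpha (\log|\alpha(\delta)|)^2$) over all hyperbolic $\delta \in \Gamma$; this is the standard fact that the length spectrum of a locally symmetric orbifold is discrete and bounded away from $0$ (Margulis lemma / discreteness of $\Gamma$). This forces $k$ to be bounded. Let $m = \max D(\gamma)$ and let $\mu$ be a witnessing element; then $\lambda_{\gamma,i} = \lambda_{\mu,i}^m$, and $\mu$ is itself absolutely primitive, because any $k' \in D(\mu)$ would give $mk' \in D(\gamma)$, forcing $k' = 1$ by maximality of $m$.

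The main obstacle I anticipate is making rigorous the ``consistent choice of roots'' language and the claim that membership of $Q(\lambda_{\gamma,1}^{1/k},\dots)$ in $\mathcal{P}$ genuinely produces a hyperbolic element $\nu$ with $\nu^k$ conjugate to (or at least iso-spectral with) $\gamma$. One has to check that a polynomial in $\mathcal{O}_K[z]$ whose roots are $k$-th roots of the $\lambda_{\gamma,i}$, and which happens to be the characteristic polynomial of some element of $\Gamma^{hyp}$, really has $k$-th powers of its roots equal to the $\lambda_{\gamma,i}$ in a way compatible with the hyperbolic-part decomposition — in particular that $\nu$ is hyperbolic (its split part is nontrivial because $\gamma$'s is). This is where the argument needs care, but it should follow from the fact that the eigenvalues determine $|\alpha(\cdot)|$ for all roots $\alpha$, and the length formula of Prasad–Rapinchuk depends only on these quantities. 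The discreteness input for boundedness of $D(\gamma)$ is then the crux, and I would cite the discreteness of the length spectrum (a consequence of $\Gamma$ being a lattice) rather than reprove it.
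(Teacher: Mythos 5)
Your proof is correct, but your handling of the key step — establishing that the set of root-exponents $k$ with $Q(\lambda^{1/k}_{\gamma,1},\dots,\lambda^{1/k}_{\gamma,n})\in\mathcal{P}$ is bounded above — takes a genuinely different route from the paper. The paper's argument is number-theoretic: since $\det(\gamma)=1$, each $\lambda_{\gamma,i}$ is a unit in $\mathcal{O}^1_{K_\gamma}$, Dirichlet's unit theorem writes it as a product of fundamental units, and taking $k$-th roots of such expressions eventually leaves $\mathcal{O}^1_{K_\gamma}$, so that for large $k$ the degree $[K(\lambda^{1/k}_{\gamma,1},\dots,\lambda^{1/k}_{\gamma,n}):K]$ exceeds $n!$ and $Q(\lambda^{1/k}_{\gamma,\bullet})$ cannot arise as the degree-$n$ characteristic polynomial of an element of $\Gamma^{hyp}$. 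Your argument is geometric: any witness $\nu_k$ for the exponent $k$ has $|\lambda_{\nu_k,i}|=|\lambda_{\gamma,i}|^{1/k}$, hence $\sum_\alpha(\log|\alpha(\nu_k)|)^2$ scales as $1/k^2$, and you invoke the standard fact that this quantity is bounded away from zero over hyperbolic elements of a lattice. Both approaches are sound. The paper's has the advantage of being self-contained at the algebraic level and not appealing to any geometry; yours is shorter but outsources the boundedness to a nontrivial discreteness statement (which, in the arithmetic setting of this paper, is ultimately proved by algebraic considerations not far from the paper's own). Your attention to the subtleties is good: hyperbolicity of the witness is indeed built into the definition of $\mathcal{P}$; the length formula depends only on the absolute values $|\lambda_{\nu_k,i}|$, which are independent of the choice of $k$-th roots; and your observation that the maximal witness $\mu$ is itself absolutely primitive (via $k'\in D(\mu)\Rightarrow mk'\in D(\gamma)$) fills in a step the paper leaves implicit.
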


\begin{proof}
The first claim is immediate from the definition.
For the second, let $\gamma$ be a fixed hyperbolic element which is not absolutely primitive.
We claim that there is a maximum element $m$ of the set
$$\left\{k\in\N\mid Q(\lambda^{1/k}_{\gamma,1},\lambda^{1/k}_{\gamma,2},\dots,\lambda^{1/k}_{\gamma,n})\in\mathcal{P}\right\}.$$
To justify this let $K_\gamma=K(\lambda_{\gamma,1},\lambda_{\gamma,2},\dots,\lambda_{\gamma,n})$.
As $\det(\gamma)=1$, each $\lambda_{\gamma,i}$ is a unit in $\mathcal{O}_{K_\gamma}$.
Dirichlet's unit theorem then yields that each $\lambda_{\gamma,i}$ is a product of powers of the fundamental units of $\mathcal{O}_{K_\gamma}^1$.
Hence for some natural number $k\in\N$ there is an $i$ such that $\lambda_{\gamma,i}^{1/k}$ no longer lies in $\mathcal{O}_{K_\gamma}^1$. 
This implies that for such $k$,
$$[K(\lambda^{1/k}_{\gamma,1},\lambda^{1/k}_{\gamma,2},\dots,\lambda_{\gamma,n}^{1/k}):K_\gamma]>1.$$
Moreover, repeating this procedure we eventually find that there exists some $N\in\N$ such that for all $k\ge N$,
$$[K(\lambda^{1/k}_{\gamma,1},\lambda^{1/k}_{\gamma,2},\dots,\lambda_{\gamma,n}^{1/k}):K]>n!.$$
It follows that $Q(\lambda^{1/k}_{\gamma,1},\lambda^{1/k}_{\gamma,2},\dots,\lambda^{1/k}_{\gamma,n})$ cannot lie in $\mathcal{P}$ for any such $k$.
Consequently such an $m$ necessarily exists.

Now let $\mu\in\Gamma^{hyp}$ be an element that obtains this maximum, i.e. has eigenvalues $\lambda_{\mu,1},\lambda_{\mu,2},\dots,\lambda_{\mu,n}$ such that
$$p_\mu(z)=Q(\lambda_{\mu,1},\lambda_{\mu,2},\dots,\lambda_{\mu,n})=Q(\lambda^{1/m}_{\gamma,1},\lambda^{1/m}_{\gamma,2},\dots,\lambda^{1/m}_{\gamma,n}).$$
Then $\mu$ is the requisite absolutely primitive element.
\end{proof}

\noindent We briefly remark that our definition of absolute primitivity should be considered a generalization of that found in Lafont--McReynolds \cite[p. 16]{LaMcR}.
The definition given there is that an absolutely primitive element is one which has the fundamental units of $K_\gamma$ as its eigenvalues.
Indeed in the cases considered therein, the two definitions are equivalent.
We briefly sketch this equivalence now.

The case they consider is when $K=\Q$ and $\Gamma=\SL(2,\Z)$.
Hence if $\gamma$ is a hyperbolic element of $\SL(2,\Z)$, $K_\gamma=\Q(\lambda_{\gamma,1})$ is a real quadratic extension of $\Q$.
Consequently $\lambda_{\gamma,1}\in\mathcal{O}_{K_\gamma}^1$ and $\lambda_{\gamma,2}=\lambda_{\gamma,1}^{-1}$ is the Galois conjugate of $\lambda_{\gamma,1}$.
The condition of being absolutely primitive then reduces to requiring that
$$\max\left\{k\in\N\mid (\lambda^{1/k}_{\gamma,1}+\lambda^{-1/k}_{\gamma,1})\in\Z\right\}=\max\left\{k\in\N\mid \lambda^{1/k}_{\gamma,1}\in\mathcal{O}_{K_\gamma}\right\}=1.$$
By Dirichlet's unit theorem, $\lambda_{\gamma,1}$ is the power of some fundamental unit of $\mathcal{O}_{K_\gamma}^1$, so the condition of being absolutely primitive implies that $\lambda_{\gamma,1}$ is a fundamental unit of $\mathcal{O}^1_{K_\gamma}$.
Moreover, one can check that any real quadratic extension $K$ of $\Q$ embeds into $\GL(2,\Q)$ and subsequently $\mathcal{O}_K^1$ embeds into $\SL(2,\Z)$.
Thus for any such $K$, we can always find some $\gamma\in\Gamma^{hyp}$ that has absolutely primitive eigenvalues.

The trouble with using a definition in terms of fundamental units in the general case, stems from the fact that a priori $K_\gamma$ may have degree $n!$ over $K$.
Consequently one cannot use an embedding of $K_\gamma$ into $G$ to produce elements with eigenvalues being merely a product of fundamental units.
In fact, recent results have shown that this issue is generic in the sense that for most elements $\gamma\in\Gamma^{hyp}$ this will be an issue (see for instance \cite{PrasadRap2}, \cite{LubRos}, \cite{JKZ}, or \cite{GorNev}), hence the restriction we impose.


\section{Controlling the lengths of the associated primitive geodesics}\label{primelengthsection}

\noindent In this section, we give the foundational work for producing subsets of the primitive length spectrum.
Letting $\Gamma=G(\mathcal{O}_K)$, we are interested in analyzing the function
\begin{align*}
n:\Gamma\times\Comm(\Gamma)&\to \N\\
(\gamma,\eta)&\mapsto \min\{j\in\N\mid \eta\gamma^j\eta^{-1}\in\Gamma\}.
\end{align*}
We show that for a certain class of elements $\eta\in\Comm(\Gamma)$, which we call admissible, the value of $n(\gamma,\eta^{r+1})$ can be controlled in terms of the value of $n(\gamma,\eta^r)$.
Indeed using the convention that $S$ denotes the set of finite, unramified places of $K$ such that $G_v$ is inner and split, we devote the entirety of this section to proving the following result.

\begin{thm}\label{primepowerseqthm}
For any $v\in S$, any $\gamma\in\Gamma^{hyp}$, and any admissible $\eta$, the following hold:
\begin{enumerate}
\item If $r$ is large enough then $n(\gamma,\eta^{r+1})=q_v^{\epsilon_r\omega_\eta}n(\gamma,\eta^{r})$, where $\epsilon_r\in\{0,1\}$ and $\omega_\eta\mid\beta_\eta$.
\item If $\gamma$ is absolutely primitive, then $\eta^r\gamma^{n(\gamma,\eta^r)}\eta^{-r}$ is a primitive element of $\Gamma$  for every $r$.
\end{enumerate}
\end{thm}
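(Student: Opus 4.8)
The plan is to analyze, for a fixed place $v\in S$, how the "return power" $n(\gamma,\eta^r)$ behaves as $r$ increases, by tracking the image of $\gamma$ inside the parahoric filtration at $v$ associated to $\eta$. First I would make precise the notion of \emph{admissible} $\eta\in\Comm(\Gamma)$: these should be elements whose only effect, with respect to a fixed maximal $K_v$-split torus and apartment, is to translate by a cocharacter, so that conjugation by $\eta$ moves a fixed parahoric $P_x$ to a parahoric $P_{x'}$ with $x'=x+\beta_\eta\cdot(\text{fundamental translation})$ for some integer $\beta_\eta$. The key local object is the descending filtration $P_x = P_x^{(0)}\supset P_x^{(1)}\supset\cdots$ of congruence-type subgroups of the parahoric, with successive quotients that are vector groups over $f_v$, together with the reductive quotient $\overline{\mathcal{G}_x^{red}}(f_v)$ of order a polynomial in $q_v$. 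The heuristic is that $\eta^r$ conjugates $\gamma$ into $\mathcal{G}$ but pushes it "deeper" into the building as $r$ grows; the obstruction to $\eta^r\gamma^j\eta^{-r}$ lying back in $\Gamma=G(\mathcal{O}_K)$ is governed by the order of the image of $\gamma$ in the relevant finite quotient, and as we move one more step ($r\to r+1$) that finite quotient changes by one more "layer," whose contribution to the order of an element is either trivial (if the element already has trivial image there) or a single power of $q_v$ — this is the source of $\epsilon_r\in\{0,1\}$ and of $\omega_\eta\mid\beta_\eta$ coming from how far the $\eta$-translation moves us through the layers.

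Concretely, for part (1) I would argue as follows. Using strong approximation / the fact that $n(\gamma,\eta^r)$ depends only on the behavior at the finitely many places where $\eta$ is non-integral (and among those, $v$ is the one being varied), reduce to a purely local statement at $v$: $n(\gamma,\eta^r)$ is, up to a factor independent of $r$, the order of the image of (a fixed power of) $\gamma$ in the quotient $P_{x}/P_{x+r\beta_\eta}^{\,}$ for the shifted parahorics. Then I would invoke the Bruhat--Tits structure theory recalled in the excerpt: the graded pieces of the parahoric filtration are $f_v$-vector groups except for the top reductive quotient, so the order of the image of a $v$-integral semisimple element in $P_x/P_{x+N}$ grows only through factors of $q_v$ (the residue characteristic $p$ and the $q_v$-power coming from the reductive quotient), and once $r$ is large enough the semisimple element $\gamma$ has "stabilized" — its image in every new graded piece is either already trivial or generates a factor of exact order $q_v$. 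Counting layers crossed when passing from the $r$-shift to the $(r+1)$-shift (which is $\beta_\eta$ layers, or a divisor thereof depending on the periodicity of which graded pieces the eigenvalue lattice of $\gamma$ actually meets nontrivially) gives the multiplicative step $q_v^{\epsilon_r\omega_\eta}$ with $\omega_\eta\mid\beta_\eta$. I expect this layer-counting and the "stabilization for $r$ large" to be the main technical obstacle: one must show the filtration-valuation of $\gamma-1$ (or of $\gamma^{q_v^{\text{something}}}-1$) is eventually additive in $r$, which requires a careful $v$-adic analysis of $\log\gamma$ and the fact that on a $v$-adic analytic subgroup raising to the $q_v$-th power shifts filtration depth by exactly one.

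For part (2), the point is that the absolute primitivity of $\gamma$ is \emph{preserved} under the operations in play, and hence forces the returned element $\eta^r\gamma^{n(\gamma,\eta^r)}\eta^{-r}$ to be primitive in $\Gamma$. I would argue: conjugation by $\eta^r\in\Comm(\Gamma)$ does not change the $\C$-conjugacy class, hence does not change the multiset of eigenvalues, so $\delta_r := \eta^r\gamma^{n(\gamma,\eta^r)}\eta^{-r}$ has eigenvalues $\lambda_{\gamma,i}^{\,n(\gamma,\eta^r)}$. If $\delta_r$ were imprimitive in $\Gamma$, then some $\delta\in\Gamma^{hyp}$ would satisfy $\lambda_{\delta,i}^{\,t}=\lambda_{\delta_r,i}=\lambda_{\gamma,i}^{\,n(\gamma,\eta^r)}$ for some $t>1$; by Lemma \ref{absprimlemma} I may even take $\delta$ absolutely primitive with $\lambda_{\gamma,i}=\lambda_{\delta,i}^{\,s}$ for an appropriate $s$. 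Chasing these eigenvalue relations — and using that $\gamma$ itself is already absolutely primitive, so its eigenvalues admit \emph{no} nontrivial root whose symmetric functions land in $\mathcal{P}$ — produces a contradiction unless $t=1$, i.e. $\delta_r$ is primitive. The one subtlety here is the interplay between "primitive" (no proper power of another $\Gamma$-element) and "absolutely primitive" (no root of the eigenvalues realized by $\mathcal{P}$): I would need the elementary observation that if $\gamma$ is absolutely primitive then every element of $\Gamma^{hyp}$ with eigenvalues $\lambda_{\gamma,i}^{\,m}$ is automatically primitive, which follows because a witnessing root of those eigenvalues would have to be a root of the $\lambda_{\gamma,i}$ themselves with symmetric functions in $\mathcal{P}$, contradicting absolute primitivity of $\gamma$. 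Granting that, part (2) is essentially formal; the substantive content of the theorem is entirely in part (1).
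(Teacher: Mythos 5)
Your plan for part~(1) is morally the same as the paper's: both proceed by reducing the return--power question to a statement about orders of elements in congruence kernels inside parahoric subgroups at $v$, and both rest on the fact that passing one more congruence layer multiplies the order by a factor of $1$ or $q_v$. The paper implements this in a more hands--on way: Lemma~\ref{buildinglemma} picks out, for each matrix coordinate $(i,j)$ with $\beta_{ij}>0$, a sub-maximal facet whose parahoric $P_{ij}$ has reductive quotient with rank--one semisimple part in exactly the right root slot, so that $\eta^r\gamma^z\eta^{-r}\in\Gamma$ becomes equivalent to $\pi^{(r\beta_{ij})}_{ij}(\gamma^z)=\overline{\id}$ for all $(i,j)$; the growth by $q_v$ is then Claim~\ref{exponentclaim}, an explicit matrix binomial computation. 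Your picture of $\eta$ as a translation in the apartment, and your appeal to the $v$-adic logarithm, would yield essentially the same structure theorem; so this direction is fine even if its details are left vague.

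Part~(2), however, has a genuine gap. You state as an ``elementary observation'' that if $\gamma$ is absolutely primitive then \emph{every} element of $\Gamma^{hyp}$ with eigenvalues $\lambda_{\gamma,i}^{m}$ is automatically primitive. This is false: $\gamma^{m}$ itself has those eigenvalues and is not primitive for $m>1$. Absolute primitivity only forbids a hypothetical root $\mu$ of $\theta_r$ whose eigenvalues involve a nontrivial $k$-th root of the $\lambda_{\gamma,i}$ -- concretely, when $\gcd(m,n(\gamma,\eta^r))\neq m$, one manufactures an element of $\Gamma^{hyp}$ with eigenvalues $\lambda^{1/k}_{\gamma,i}$ and contradicts absolute primitivity. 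But if $m\mid n(\gamma,\eta^r)$, there is no contradiction from eigenvalues alone: $\mu$ has eigenvalues $\lambda_{\gamma,i}^{n(\gamma,\eta^r)/m}$, perfectly consistent with $\gamma$ being absolutely primitive. The necessary final step, which your plan omits, is the minimality built into the definition of $n(\gamma,\eta^r)$: in the remaining case one identifies $\mu$ with $\eta^r\gamma^{n(\gamma,\eta^r)/m}\eta^{-r}$, and $\mu\in\Gamma$ would then say $\eta^r\gamma^{j}\eta^{-r}\in\Gamma$ for $j=n(\gamma,\eta^r)/m < n(\gamma,\eta^r)$, contradicting minimality of $n$ unless $m=1$. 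The paper's Lemma following Proposition~\ref{reductionprop} does precisely this; without it your argument cannot rule out the divisor case.
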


\noindent We will define admissible elements and $\beta_\eta$ in the subsequent text, however first we briefly give the idea of the proof of Theorem \ref{primepowerseqthm}.
Given a hyperbolic $\gamma$, we will choose $\eta\in\Comm(\Gamma)$ so that conjugation by $\eta$ multiplies the matrix coefficients of $\gamma$ in a controlled way.
We then use the existence of sequences of finite index subgroups which filter the lattice $\Gamma$ to relate the value of $n(\gamma,\eta^r)$ to the minimal power of $\gamma$ which lies in one of these filtering subgroups.
This will allow us to relate the values of $n(\gamma,\eta^{r})$ and $n(\gamma,\eta^{r+1})$ by using the corresponding relationship between powers of $\gamma$ landing in successive steps of this filtration.
The construction of the filtration will use natural maps to finite groups in a way such that we obtain precise control on the latter quantity.

Before proceeding to the proofs, we now define the building blocks of our admissible $\eta$.
To this end, let
$$\xi_{v,k}=\diag(1,\dots,1,v,v^{-1},1,\dots,1),$$
where $v$ is in the $k$th spot.
By convention we define $\xi_{v,n}$ to be the matrix
$$\xi_{v,n}=\diag(v^{-1},1,\dots,1,v).$$
Throughout the entirety of this section, $\eta$ will always be an element of the form
$$\eta=\xi_{v,1}^{\alpha_1}\cdot\xi_{v,2}^{\alpha_2}\cdot~\dots~\cdot\xi_{v,n}^{\alpha_n},$$
where $\alpha_i\in\Q$.
For each classical type, we always assume the $\alpha_i$ are chosen so that under the isomorphism afforded by restriction of scalars, $\eta\in\mathcal{G}$.
Conjugating a matrix $A=(a_{ij})$ by $\eta$ gives $\eta A\eta^{-1}=(b_{ij})$, where $b_{ij}=v^{-\beta_{ij}}a_{ij}$ and $\beta_{ij}\in\Q$.
We call $\eta$ \textbf{admissible} if the following two conditions are satisfied:
\begin{enumerate}
\item $\beta_{ij}\in\Z$ for all $1\le i,j\le n$.
\item The $\beta_{ij}$ are constant on each root subgroup.
Namely for each root $\alpha$, if $i,j$ and $i',j'$ are such that the $i,j$th and $i',j'$th coefficients in $U_\alpha$ are non-diagonal and non-zero, then $\beta_{ij}=\beta_{i'j'}$.
\end{enumerate} 
For such admissible $\eta$, we also define
$$\beta_\eta=\lcm\{\beta_{ij}\mid \beta_{ij}>0\}.$$
This is the quantity mentioned in (1) of Theorem \ref{primepowerseqthm}.
By construction, any admissible $\eta$ is contained in the commensurator of $\Gamma$.


\subsection{Construction of filtering subgroups.}\label{buildinglemmasection}

We now begin with the construction of our filtering subgroups of $\Gamma$.
These will arise as kernels of natural maps to finite groups, afforded by applying Bruhat--Tits theory to the $G_v$ over the local fields $K_v$.
Recall from above that $S$ denotes the set of finite, unramified places of $K$ such that $G_v$ is inner and split.
We begin with a building theoretic lemma.

\begin{lemma}\label{buildinglemma}
Let $i,j\in\{1,\dots,n\}$ with $i\neq j$ and let $\alpha$ be the unique root such that $U_\alpha$ has a non-zero coefficient in the $i,\!j$th entry.
Then for any $v\in S$, there exists a point $x\in\mathcal{B}_v$ and a parahoric $P_x$ with associated group scheme $\mathcal{G}_x$ such that either
$$\overline{\mathcal{G}_x^{red}}\cong\GL_2(f_v)\times\left(\prod_{z=1}^{\rk_{K_v}(G_v)-2}\GL_1(f_v)\right),$$
or
$$\overline{\mathcal{G}_x^{red}}\cong\SL_2(f_v)\times\left(\prod_{z=1}^{\rk_{K_v}(G_v)-1}\GL_1(f_v)\right).$$
Furthermore, the only non-zero entries of $\overline{\mathcal{G}_x^{red}}$ which are not on the diagonal are those with the same entries as those of the root subgroups $U_\alpha$ and $U_{-\alpha}$.
\end{lemma}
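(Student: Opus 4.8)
\textbf{Proof plan for Lemma \ref{buildinglemma}.}

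The plan is to exhibit the desired parahoric explicitly in the apartment $\mathcal{A}_v$ attached to the standard maximal $K_v$-split torus $T_v$, by choosing a point $x$ whose stabilizer is just large enough to pick out a single rank-one subsystem. Since $v\in S$ means $G_v$ is split and inner over $K_v$, the relative root system coincides with the absolute one, and the affine root system governing $\mathcal{B}_v$ is the one attached to the classical type of $G$. The root $\alpha$ singled out by the $(i,j)$ entry is, for the matrix realizations fixed in Section \ref{NotationSection}, either a long or short root of the classical system; in every case it is a member of a set of simple roots for some choice of positive system. First I would choose $x$ to be a point on the wall corresponding to $\alpha$ in the fundamental alcove — more precisely, a vertex (or face) of the fundamental alcove at which exactly the affine functionals $\alpha$, $\alpha - 1$ (equivalently $-\alpha$ up to translation), and the remaining \emph{linear} simple roots vanish, while the remaining affine walls do not pass through $x$. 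Then $P_x$ is the parahoric generated by $T_v(\mathcal{O}_{K_v})$ together with the root groups $U_\beta(\mathcal{O}_{K_v})$ for $\beta$ in the parabolic-type subset determined by $x$.

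Next I would identify $\overline{\mathcal{G}_x^{red}}$ using Bruhat--Tits \cite[3.5.2]{Tits}: the reductive quotient of the special fiber has root datum given by the \emph{gradient} root system of the facet, i.e. the sub-root-system of the affine roots vanishing on $F$, taken as linear forms. By the choice of $x$, this gradient system is spanned by $\pm\alpha$ together with a set of mutually orthogonal (to each other and, crucially, such that the whole configuration is a rank-$\ell$ abelian-plus-$A_1$ datum) linear characters coming from the torus, so $\overline{\mathcal{G}_x^{red}}$ is a rank-one semisimple group times a torus. Whether the rank-one factor is $\SL_2$ or $\GL_2$, and correspondingly whether the torus part has $\rk_{K_v}(G_v)-1$ or $\rk_{K_v}(G_v)-2$ factors, is dictated by whether $\alpha$ is a primitive character of the cocharacter lattice of $G_v$ or is divisible by $2$ inside the character lattice of the relevant torus quotient — this is exactly the standard dichotomy between the $\SL_2$ and $\GL_2/\PGL_2$ situations, and it can be read off type by type from the realizations in Section \ref{NotationSection}. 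The surjectivity $P_x \twoheadrightarrow \overline{\mathcal{G}_x^{red}}~\overline{\mathcal{R}_u(\mathcal{G})}$ is part of Bruhat--Tits theory as recalled in the Bruhat--Tits subsection of the excerpt, so nothing extra is needed there.

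For the final assertion about off-diagonal entries, I would trace through the explicit matrix realization: the parahoric $P_x$ is a subgroup of $G_v \subset \SL_n$ whose reduction mod $v$ has nonzero off-diagonal entries precisely in the positions occupied by the root groups $U_\beta$ with $\beta$ in the gradient system, and since the only roots in that system are $\pm\alpha$, those positions are exactly the entries of $U_\alpha$ and $U_{-\alpha}$ (all other positive/negative root groups have been pushed into the unipotent radical and therefore reduce into $\overline{\mathcal{R}_u(\mathcal{G})}$, not into $\overline{\mathcal{G}_x^{red}}$). The main obstacle I anticipate is bookkeeping: one must verify, uniformly across all five families of classical real forms and for \emph{every} admissible choice of root position $(i,j)$, that a point $x$ of the claimed type genuinely exists in the fundamental alcove — i.e. that the wall arrangement of the affine Weyl group really does admit a facet whose gradient system is exactly $\{\pm\alpha\}$ — and that the $\SL_2$-versus-$\GL_2$ alternative, together with the accompanying count of $\GL_1$ factors, comes out as stated. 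This is a finite check that I would organize by the classical type ($^1A$, $^2A$, $B/{}^1D$, $C$, $^2D$), using the extended Dynkin diagram of each to locate the relevant vertex, rather than by a single uniform argument.
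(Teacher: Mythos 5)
Your overall strategy matches the paper's: pick a facet of the building whose gradient root system is $\{\pm\alpha\}$ and read off the reductive quotient via Bruhat--Tits \cite[3.5.2]{Tits}. However, there are two concrete problems in the way you describe the facet.

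First, your description of the point $x$ is internally inconsistent and would not produce the right reductive quotient. You ask for a facet "at which exactly the affine functionals $\alpha$, $\alpha-1$ \dots\ and the remaining linear simple roots vanish." The conditions $\alpha(x)=0$ and $\alpha(x)=1$ cannot both hold, and, more importantly, if the remaining simple roots all vanish at $x$ then $x$ is a vertex of the alcove (a $0$-face), and the gradient root system at a vertex is rank $\rk_{K_v}(G_v)$ rather than rank $1$ — you would get a maximal parahoric, not the $A_1\times(\text{torus})$ reduction claimed. What you actually want is a point in the relative interior of a $(d-1)$-facet, i.e.\ on exactly one wall and off all the others, so that precisely one node of the local Dynkin diagram survives. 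This is what the paper's proof does: it takes $x$ with maximal containing facet a $(d-1)$-simplex, so that after deleting the other nodes only $\pm\alpha$ remains.

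Second, confining the construction to the fundamental alcove is a genuine gap for the final assertion about which off-diagonal entries survive. The walls of the fundamental alcove correspond only to the simple affine roots (the simple roots $\alpha_1,\dots,\alpha_\ell$ plus $1-\tilde\alpha$), but the $\alpha$ fixed in the lemma by the $(i,j)$ position can be an arbitrary root. For such an $\alpha$ there is no wall of the fundamental alcove lying in an $\alpha$-hyperplane, so your recipe produces no candidate facet. You flag this yourself as "the main obstacle," but do not resolve it, and a type-by-type check of extended Dynkin diagrams will not repair it because the issue is which \emph{hyperplane} the facet lies in, not which type you are in. The paper sidesteps this cleanly by observing that (a) the isomorphism type of $\overline{\mathcal{G}_x^{red}}$ is chamber-independent, so one chamber suffices to establish the two allowed forms, and (b) separately, for the claim about root groups, \emph{any} $\alpha$-hyperplane in the apartment contains many $(d-1)$-simplices, so one simply chooses a suitable $x$ on one of those — not in the fundamental alcove. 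You should decouple the two claims in the same way rather than trying to realize everything in one alcove.
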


\begin{proof}
As $G_v$ is inner and split, we know that $\rk_{K_v}(G_v)$ is the same as the absolute rank of $G$ and all of its roots are defined over $K_v$.
Let $\mathcal{B}_v$ be the associated building.
For the first claim we are only concerned with the isomorphism type of $\overline{\mathcal{G}_x^{red}}$ and this is invariant under choice of chamber, so it suffices to simply consider the closure of a single chamber in a single apartment.
The closure $\overline{C}$ of such a chamber is a $d$-dimensional simplicial complex.
Now let $x$ be any point such that the maximal facet containing $x$ in $\overline{C}$ is a $(d-1)$-simplex.
Then $x$ is contained in the vanishing hyperplane of a single root of the local Dynkin diagram.
By our assumptions on $G_v$, the local Dynkin diagram is simply the extended Dynkin diagram carrying a trivial Galois action.
Bruhat--Tits theory then gives that the Dynkin diagram of $\overline{\mathcal{G}_x^{red}}$ is obtained by removing all nodes except for one \cite[Section 3.5.2]{Tits}.
Combining this with the fact that $\rk_{\mathbb{F}_v}(\overline{\mathcal{G}_x^{red}})=\rk_{K_v}(G_v)$, we obtain the desired conclusion on the isomorphism type of $\overline{\mathcal{G}_x^{red}}$.
The claim on root subgroups follows from the fact that, in each apartment, every root produces a collection of vanishing hyperplanes and each of these hyperplanes gives rise to several $(d-1)$-simplices so it suffices to choose a point on one of these.
\end{proof}

\noindent We remark that the specific choice of $x$ in Lemma \ref{buildinglemma} is not unique.
Consequently when we refer to it in the subsequent text, any such choice of $x$ will suffice.
Assuming the notation of Lemma \ref{buildinglemma}, we now fix some notation for the next lemma as well as forthcoming sections.
Since $P_x=\mathcal{G}_x(\mathcal{O}_{K_v})$, we may define natural reduction maps
$$\xymatrixrowsep{.05in}
\xymatrix{\pi^{(r)}:P_x\ar[r]&\overline{\mathcal{G}_x^{red}}(\mathcal{O}_{K_v}/v^r\mathcal{O}_{K_v})\\
\overline{\pi}^{(r)}:\overline{\mathcal{G}_x^{red}}(\mathcal{O}_{K_v}/v^{r}\mathcal{O}_{K_v})\ar[r]&\overline{\mathcal{G}_x^{red}}(\mathcal{O}_{K_v}/v^{r-1}\mathcal{O}_{K_v})}$$
with the relationship $\pi^{(r)}=\overline{\pi}^{(r+1)}\circ\pi^{(r+1)}$.
With this notation in mind, we prove the following lemma.

\begin{lemma}\label{exponentlemma}
For any $B\in P_x$ and any sufficiently large $r$, if $\pi^{(r)}(B)$ is in the kernel of $\overline{\pi}^{(r)}$ then $\pi^{(r)}(B)$ has order $1$ or $q_v$.
\end{lemma}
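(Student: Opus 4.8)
The plan is to reduce the statement to a computation inside the congruence filtration of $\overline{\mathcal{G}_x^{red}}(\mathcal{O}_{K_v})$ and to exploit the fact that the residue characteristic $p$ (the characteristic of $f_v$) is the only prime dividing the order of any nontrivial element in this filtration. First I would recall the standard description of the congruence kernels: for $r$ large the kernel of $\overline{\pi}^{(r)}$ on $\overline{\mathcal{G}_x^{red}}(\mathcal{O}_{K_v}/v^r\mathcal{O}_{K_v})$ is isomorphic, via $X \mapsto \mathrm{I} + v^{r-1}X$, to the additive group of the Lie algebra $\overline{\mathfrak{g}_x^{red}}$ over $f_v$ (or at least embeds into $\mathrm{Mat}_n(f_v)$ additively). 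Concretely, if $\pi^{(r)}(B) \in \ker \overline{\pi}^{(r)}$, write $\pi^{(r)}(B) = \mathrm{I} + v^{r-1}C$ for some matrix $C$ with entries in $\mathcal{O}_{K_v}/v\mathcal{O}_{K_v} \cong f_v$; one must check this lift is well-defined, which is where "sufficiently large $r$" enters (one needs $2(r-1) \geq r$, i.e.\ $r \geq 2$, so that $v^{2(r-1)} \equiv 0 \bmod v^r$).

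The key step is then the binomial computation: $(\mathrm{I} + v^{r-1}C)^{q_v} = \mathrm{I} + q_v v^{r-1}C + \binom{q_v}{2} v^{2(r-1)}C^2 + \cdots$. Since $q_v = |f_v|$ is a power of $p$ and $p \cdot v^{r-1} \in v^r \mathcal{O}_{K_v}$ (as $p \in v\mathcal{O}_{K_v}$ because $v$ lies over $p$), the first-order term $q_v v^{r-1}C$ already vanishes modulo $v^r$ once $r \geq 2$; all higher terms vanish for the same reason (they carry either a factor of $p$ from the binomial coefficient or a factor $v^{2(r-1)}$). Hence $\pi^{(r)}(B)^{q_v} = \mathrm{I}$, so the order of $\pi^{(r)}(B)$ divides $q_v$, hence is a power of $p$. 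To upgrade "a power of $p$" to "exactly $1$ or $q_v = q_v$" — wait, the statement says order $1$ or $q_v$, not $1$ or $p$; I would instead argue that in fact the order divides $q_v$ and, if nontrivial, the group $\ker\overline{\pi}^{(r)}$ has exponent exactly $p$, so every nontrivial element has order $p$. Since $q_v$ is a prime power $p^f$, the claim as literally stated ("order $1$ or $q_v$") should be read with $q_v$ playing the role of the exponent bound; more carefully, $\ker \overline{\pi}^{(r)} \cong (\overline{\mathfrak{g}_x^{red}}, +)$ is an elementary abelian $p$-group, so its nontrivial elements have order exactly $p$, and since $p \mid q_v$ this gives the order-$q_v$ statement once one notes $q_v = p$ precisely when $f_v = \mathbb{F}_p$; in general I expect the intended reading is that the order is a nontrivial power of $p$ bounded by $q_v$, so I would phrase the conclusion as: the order is $1$ or a power of $p$ dividing $q_v$, and then just invoke $q_v$ as the uniform bound used in the sequel.

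The main obstacle will be pinning down the precise group-scheme-theoretic description of $\ker \overline{\pi}^{(r)}$ and verifying it is elementary abelian. This requires knowing that $\overline{\mathcal{G}_x^{red}}$ is smooth over $\mathcal{O}_{K_v}$ (which Bruhat--Tits theory provides, since $\mathcal{G}_x$ is a smooth affine $\mathcal{O}_{K_v}$-scheme and passing to $\overline{\mathcal{G}_x^{red}}$ preserves this), so that the congruence filtration behaves like that of $\mathrm{GL}_n$: the $r$-th congruence subgroup modulo the $(r+1)$-st is the $f_v$-points of the Lie algebra, and in particular is killed by $p$. I would cite the standard structure theory of congruence subgroups of smooth group schemes over complete discrete valuation rings for this; the only subtlety is that for $\mathrm{GL}_2$ and $\mathrm{SL}_2$ factors (the two cases produced by Lemma \ref{buildinglemma}) one can also just do the $2\times 2$ matrix computation by hand, which makes the whole argument self-contained and avoids appealing to the general theory. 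That hands-on verification for $2\times 2$ matrices, combined with the observation that the $\mathrm{GL}_1$ factors contribute only units of the form $1 + v^{r-1}c$ which obviously satisfy $(1+v^{r-1}c)^{q_v} \equiv 1 \bmod v^r$, is the cleanest route, and I would take it.
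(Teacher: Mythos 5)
Your binomial computation is fine, but it proves something strictly stronger than the lemma and, as you yourself notice, actually conflicts with the statement: for unramified $v$ lying over $p$, the kernel of $\overline{\pi}^{(r)}$ embeds additively into $\Mat_n(f_v)$ via $\id + v^{r-1}C \mapsto C$, hence is elementary abelian of exponent $p$; so nontrivial elements have order exactly $p$, not $q_v = p^e$. You flag this tension and propose weakening the conclusion to ``order a $p$-power dividing $q_v$,'' but that weaker reading does not support the way the lemma is used downstream: Scholium~\ref{exponentcorollary} and Proposition~\ref{reductionprop} compute the ratio $n(\gamma,\eta^{r+1})/n(\gamma,\eta^r)$ as $q_v^{\epsilon_r\omega_\eta}$, and Proposition~\ref{primepowerlengprop} needs the subsets $\{\ell, q_v\ell, q_v^2\ell,\dots\}\subset\mathcal{L}_p(M)$ with $q_v$, not $p$. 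Your proposal never attempts what the paper's proof is almost entirely devoted to, namely the lower bound on the order.

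The paper's route is genuinely different and worth contrasting with yours. It identifies the reduction tower with $\SL(n,\Z/q_v^r\Z)$, writes $\overline{\pi}^{(r)}$ as a composite of $e$ single-$p$-step maps $\psi_i:\SL(n,\Z/p^{i+1}\Z)\to\SL(n,\Z/p^i\Z)$, and then spends nearly all of its effort on Claim~\ref{exponentclaim}, which says that a nontrivial element of $\ker\psi_{i+1}$ has $B^p\ne\id$ at the next level; iterating that claim $e$ times forces the order to be exactly $q_v=p^e$. Your computation works directly in $\mathcal{O}_{K_v}/v^r\mathcal{O}_{K_v}$ and sidesteps that identification, and it gets a different answer: the additive group of $\mathcal{O}_{K_v}/v^r\mathcal{O}_{K_v}$ is $(\Z/p^r\Z)^e$, not $\Z/p^{er}\Z=\Z/q_v^r\Z$, so the two descriptions of $\ker\overline{\pi}^{(r)}$ do not coincide when $e>1$. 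Rather than smoothing this over with an ``intended reading,'' you should state plainly that your argument yields order $1$ or $p$, note that this appears to contradict the lemma as written for $e>1$, and then either reproduce the paper's argument over $\Z/q_v^r\Z$ (making the identification explicit so the discrepancy can be examined) or explain how the subsequent propositions survive with $p$ substituted for $q_v$ throughout.
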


\begin{proof}
For ease of notation, let $q=q_v=p^e$ for some $e>0$.
Throughout the proof, $\id_{i}$ will denote the identity in $\SL(n,\Z/p^i\Z)$ and
$$\psi_{i+1}:\SL(n,p^{i+1}\Z)\to\SL(n,\Z/p^i\Z),$$
will denote the natural reduction maps.
The order of any element in $\ker(\psi_{i})$ is easily seen to be $1$ or $p$.
Note that we may immediately reduce to proving the corresponding result for $\SL(n,\Z/q^s\Z)$.
Indeed, the conclusion for $\SL(n,\Z/q^s\Z)$ implies the conclusion for reduction maps between its subgroups and hence the $\overline{\mathcal{G}_x^{red}}(\mathcal{O}_{K_v}/v^{s}\mathcal{O}_{K_v})$.
With this in mind, we will need the following claim.

\begin{claim}\label{exponentclaim}
Let $B\in \SL(n,\Z/p^{i+2}\Z)$ such that $\id_{i+1}\neq \psi_{i+2}(B)\in\ker(\psi_{i+1})$.
Then $B^p\neq\id_{i+2}$, so long as $i\neq 1$ and $p\neq 2$.
\end{claim}

\noindent Momentarily assuming Claim \ref{exponentclaim}, we complete the proof.
As we are only concerned with sufficiently large $r$, we may ignore the case when $p=2$ and $i=1$.
After identification we have the natural reduction maps
$$\overline{\pi}^{(s+1)}:\SL(\Z/q^{s+1}\Z)\to \SL(\Z/q^s\Z),$$
which we may write as
$$\overline{\pi}^{(s+1)}= \psi_{se+1}\circ\dots\circ \psi_{(s+1)e-1}\circ \psi_{(s+1)e}~.$$
If $\pi^{(s+1)}(B)$ is a non-trivial element of $\ker(\overline{\pi}^{(s+1)})$ such that $(\psi_{se+2}\circ\dots\circ \psi_{(s+1)e-1}\circ \psi_{(s+1)e})(\pi^{(s+1)}(B))$ is also non-trivial, then by applying Claim \ref{exponentclaim} repeatedly to the maps $\psi_{(s+1)e-j}$ for $0\le j< e$ we see that the order of $B$ must be $q$.
Otherwise, taking $s'=s+1$ will force that $(\psi_{s'e+2}\circ\dots\circ \psi_{(s'+1)e-1}\circ \psi_{(s'+1)e})(\pi^{(s'+2)}(B))$ is non-trivial and so the same argument implies $B$ must have order $q$.
Hence the conclusion holds for $\overline{\pi}^{(s+1)}$ or $\overline{\pi}^{(s'+1)}$.
An iterated application of Claim \ref{exponentclaim} then implies that the same conclusion must also hold for all integers larger than $s'$.
This completes the proof.
\end{proof}

\begin{proof}[Proof of Claim \ref{exponentclaim}]
Let $B\in \SL(n,\Z/p^{i+2}\Z)$.
We prove the contrapositive, namely that $B^p=\id_{i+2}$ and $\psi_{i+2}(B)\in\ker(\psi_{i+1})$ implies that $\psi_{i+2}(B)=\id_{i+1}$.
For ease of notation, choose any lift of $B$ to an element of $\SL(n,\Z)$, i.e. choose $\widetilde{B}$ such that $\widetilde{B}\equiv B\pmod{p^{i+2}}$.
As $\psi_{i+2}(B)\in\ker(\psi_{i+1})$, we may write $\widetilde{B}=\id+p^iC$, where $\id$ is the identity matrix in $\SL(n,\Z)$.
We will show that $p\mid C$ unless $p=2$ and $i=1$.

Since $B^p\equiv\id_{i+2}\in\SL(n,\Z/p^{i+2}\Z)$, we may also write $\widetilde{B}$ as
$$\widetilde{B}^p=\id+p^{i+2}C',$$
for some $C'$.
The matrix binomial theorem then gives
\begin{align*}
\widetilde{B}^p&=(\id+p^iC)^p,\\
&=\id+p^{i+1}C+\sum_{k=1}^{p-2}\binom{p}{k}(\id)^k(p^iC)^{p-k}+p^{ip}C^p,
\end{align*}
and so 
$$p^{i+2}C'=p^{i+1}C+\sum_{k=1}^{p-2}\binom{p}{k}(\id)^k(p^iC)^{p-k}+p^{ip}C^p.$$
As we are outside of the case when $p=2$ and $i=1$, we know that $ip\ge i+2$.
Hence $p\mid C$ completing the proof of Claim \ref{exponentclaim} and consequently Lemma \ref{exponentlemma}.
\end{proof}

\noindent Examining the proof of Lemma \ref{exponentlemma} immediately yields the following.

\begin{sch}\label{exponentcorollary}
Let $\gamma\in P_x$ such that $\pi^{(N)}(\gamma)\in\ker(\overline{\pi}^{(N)})$ has order $q_v$.
Then $\pi^{(N+k)}(\gamma)$ has order $q_v$ for any $k\ge 0$.
\end{sch}


\subsection{Controlling the function $n(-,-)$}\label{controlsection}

\noindent In this section, we use the results of Section \ref{buildinglemmasection} to prove (1) in Theorem \ref{primepowerseqthm}.
More specifically, we use the filtering subgroups given as the kernels of each $\pi^{(r)}$ intersected with $\Gamma$ to control the function $n(\gamma,-)$.
This allows us to prove the following.

\begin{prop}\label{reductionprop}
Fix $\gamma\in\Gamma^{hyp}$ and $v\in S$.
Then for any admissible $\eta$ and sufficiently large $r$
$$n(\gamma,\eta^{r+1})=q_v^{\epsilon_r\omega_\eta}n(\gamma,\eta^r),$$
where $\epsilon_r\in\{0,1\}$ and $\omega_\eta\mid\beta_\eta$.
\end{prop}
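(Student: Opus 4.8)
The plan is to translate the computation of $n(\gamma,\eta^r)$ into a computation about the first index at which a power of $\gamma$ enters a filtering subgroup, and then to exploit Lemma \ref{exponentlemma} (and Scholium \ref{exponentcorollary}) to show that, as $r$ increases by one, this index can only change by a multiplicative factor of $q_v$. First I would fix $\gamma\in\Gamma^{hyp}$, an admissible $\eta$, and $v\in S$, and apply Lemma \ref{buildinglemma} to produce a point $x\in\mathcal{B}_v$ with parahoric $P_x$ whose reductive quotient $\overline{\mathcal{G}_x^{red}}$ has its off-diagonal entries supported exactly on the root subgroups $U_{\pm\alpha}$ attached to the relevant index pair $i,j$. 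The admissibility of $\eta$ is exactly engineered so that conjugation by $\eta$ scales the $(i,j)$ matrix coefficient of any matrix by $v^{-\beta_{ij}}$ with $\beta_{ij}\in\Z$, constant on each root subgroup; hence, after choosing the root pair $i,j$ so that $\beta_{ij}=\beta_\eta>0$ (relabel if necessary), conjugation by $\eta^r$ multiplies the relevant ``off-diagonal'' coordinate of a matrix by $v^{-r\beta_\eta}$, i.e. pushes it deeper into the congruence filtration of $P_x$ by exactly $r\beta_\eta$ steps in the $v$-adic valuation. Composing the reduction maps $\pi^{(r)}$ of Section \ref{buildinglemmasection} with the inclusion $\Gamma\hookrightarrow P_x$ gives finite quotients of $\Gamma$, and one sets $\omega_\eta$ to be the number of such steps ``per power of $\gamma$'' dictated by the order statement, which will turn out to divide $\beta_\eta$.

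The key computation is then the following: $\eta^r\gamma^j\eta^{-r}\in\Gamma$ if and only if $\gamma^j$ lies in a certain congruence subgroup $\Gamma(r)=\ker(\pi^{(m(r))})\cap\Gamma$ of $\Gamma$, where $m(r)$ grows linearly in $r$ with slope controlled by $\beta_\eta$ (because conjugation by $\eta^r$ clears denominators of $v$-adic valuation $r\beta_{ij}$, and a matrix in $P_x$ returns to $\Gamma$ precisely once its ``bad'' coordinates are divisible by $v$ to the appropriate power). Consequently $n(\gamma,\eta^r)$ is the smallest $j$ with $\gamma^j\in\Gamma(r)$. Now consider the image $\bar\gamma$ of $\gamma$ in the finite group $\overline{\mathcal{G}_x^{red}}(\mathcal{O}_{K_v}/v^{m(r)}\mathcal{O}_{K_v})$: if $r$ is large, the minimal power $j=n(\gamma,\eta^r)$ killing $\gamma$ at level $m(r)$, versus $j'=n(\gamma,\eta^{r+1})$ killing it at level $m(r+1)$, differ by the order of $\gamma^{j}$ inside the kernel of the reduction $\overline{\pi}^{(m(r+1))}\circ\dots$ between these two levels — and by Lemma \ref{exponentlemma} that order is $1$ or $q_v$ (iterated over the finitely many intermediate reduction steps, which by admissibility and the choice of root all contribute the same factor, giving $q_v^{\omega_\eta}$ with $\omega_\eta\mid\beta_\eta$). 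Setting $\epsilon_r=0$ in the first case and $\epsilon_r=1$ in the second yields $n(\gamma,\eta^{r+1})=q_v^{\epsilon_r\omega_\eta}\,n(\gamma,\eta^r)$. Scholium \ref{exponentcorollary} guarantees that once a factor of $q_v$ has appeared, it persists, so the formula is stable for all large $r$; this is what ``sufficiently large $r$'' absorbs.

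I expect the main obstacle to be the bookkeeping that identifies $m(r)$ precisely and shows it increases by exactly $\omega_\eta$ (a divisor of $\beta_\eta$) each time $r\mapsto r+1$, uniformly in $\gamma$ — in other words, verifying that conjugation by $\eta$ interacts with the parahoric level filtration in the clean linear way claimed, across all five classical types and all admissible $\eta$. The subtlety is that $\eta$ is only guaranteed to lie in $\Comm(\Gamma)$, not in $P_x$ itself, so one must argue at the level of the building and the affine group scheme $\mathcal{G}_x$ rather than by a naive matrix congruence, using that the $\beta_{ij}$ are integral and constant on root subgroups (conditions (1) and (2) of admissibility) to keep $\eta^r\gamma^j\eta^{-r}$ inside $P_x$ with controlled valuations on the $U_{\pm\alpha}$ coordinates. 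The second statement of Theorem \ref{primepowerseqthm}, about primitivity of $\eta^r\gamma^{n(\gamma,\eta^r)}\eta^{-r}$ when $\gamma$ is absolutely primitive, is not needed here and is handled separately; this proposition is only the ``order $1$ or $q_v$'' propagation statement (1).
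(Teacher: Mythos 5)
Your proposal follows the same overall strategy as the paper (reduce to orders of elements in successive kernels of parahoric reduction maps, then invoke Lemma \ref{exponentlemma} and Scholium \ref{exponentcorollary}), but two steps in your setup are genuinely wrong or unsupported. First, you assert an inclusion $\Gamma\hookrightarrow P_x$, but $\Gamma$ does not lie in $P_x$ in general: $P_x$ is the stabilizer of a point on a wall, not a hyperspecial maximal parahoric. The paper instead observes that $P_x$ sits inside a maximal \emph{hyperspecial} parahoric $P^{hyp}$ containing $\Gamma$, reduces $\gamma$ into the finite group $\overline{\mathcal{G}}$, and uses this to produce an integer $Z$ with $\gamma^Z\in P_x$; everything afterward is stated only for multiples of $Z$. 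That auxiliary $Z$ is not a cosmetic bookkeeping device---it is precisely what makes the characterization of $n(\gamma,\eta^r)$ in terms of $\pi_{ij}^{(r\beta_{ij})}$ legitimate, and it is the value of $n(\gamma,\eta)$.

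Second, you reduce to a single pair $(i,j)$ and a single parahoric, "choosing the root pair $i,j$ so that $\beta_{ij}=\beta_\eta$." This cannot be done: $\beta_\eta$ is the $\lcm$ of the positive $\beta_{ij}$, and no individual $\beta_{ij}$ need equal it. More importantly, the condition $\eta^r\gamma^z\eta^{-r}\in\Gamma$ is a \emph{conjunction} of divisibility constraints over all $(i,j)$ in the set $T=\{(i,j):\beta_{ij}>0\}$, so $n(\gamma,\eta^r)$ is a maximum over $T$ of per-pair minimal powers, and Lemma \ref{buildinglemma} only controls the two off-diagonal entries carried by a single root pair $U_{\pm\alpha}$. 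The paper therefore constructs one parahoric $P_{ij}$ (with its tower of reductions $\pi_{ij}^{(s)}$, $\overline{\pi}_{ij}^{(s)}$) for \emph{each} $(i,j)\in T$, characterizes $z=n(\gamma,\eta^r)$ by three simultaneous conditions across all of $T$, and only then deduces that the joint jump from level $r$ to $r+1$ has order $1$ or $q_v^{\omega_\eta}$ with $\omega_\eta\mid\beta_\eta$. If you want to argue instead that the pair with maximal $\beta_{ij}$ eventually dominates the maximum, you would need to supply that asymptotic-dominance argument explicitly and explain why it stabilizes to a clean power of $q_v$; as written, that step is missing. With these two repairs---introducing $Z$ via the hyperspecial containment, and replacing the single parahoric by the family $\{P_{ij}\}_{(i,j)\in T}$ together with the joint-kernel argument---your proposal would coincide with the paper's proof.
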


\begin{proof}
We write $\gamma=(\gamma_{ij})$.
As $\Gamma$ is the $\mathcal{O}_K$ points of $G$, we see that $\eta\gamma\eta^{-1}\in\Gamma$ if and only if $v^{\beta_{ij}}\mid\gamma_{ij}$, whenever $\beta_{ij}>0$.
An equivalent condition to this can be given in terms of the subgroups afforded by Lemma \ref{buildinglemma}.
To this end, let
$$T=\{(i,j)\mid \beta_{ij}>0\}.$$
For any $(i,j)$ in $T$, let $x_{ij}$ be the point in $\mathcal{B}_v$ furnished by Lemma \ref{buildinglemma}, let $P_{ij}=P_{x_{ij}}$ be the corresponding parahoric, and let $\mathcal{G}_{ij}$ be the group scheme associated to $P_{ij}$.
For each $\mathcal{G}_{ij}$, we denote the natural surjections from Section \ref{buildinglemmasection} as $\pi^{(r)}_{ij}$ and $\overline{\pi}^{(r)}_{ij}$.
We claim that there is some number $Z$ such that for all multiples $z$ of $Z$, $\gamma^z\in P_{ij}$ for all $1\le i, j\le n$ with $i\neq j$.
Indeed by construction of the $P_{ij}$ in Lemma \ref{buildinglemma}, we know that $P_{ij}$ is contained in some maximal hyperspecial parahoric $P^{hyp}_{ij}$.
This follows from the fact that, in the inner, split case, there is at least one hyperspecial vertex in the closure of every $(d-1)$--simplex of each chamber.
Moreover, this implies that there is an embedding of $\Gamma$ into $P^{hyp}_{ij}$.
We then have the natural reduction maps
$$\xymatrixrowsep{.11in}\xymatrix{P_{ij}^{hyp}\ar[r]^{\pi_{ij}^{hyp}}\ar@{}[d]|{\rotsubset}&\overline{\mathcal{G}}\ar@{}[d]|{\rotsubset}\\
P_{ij}\ar[r]^{\pi_{ij}}&\overline{\mathcal{G}_{ij}^{red}}}$$
where $\overline{\mathcal{G}}$ is either $\SL_n(f_v)$, $\Sp_n(f_v)$, or $\SO_n(f_v)$.
As $\overline{\mathcal{G}}$ is finite, some power of $\gamma$ becomes trivial and we denote this power by $z_{ij}$.
Letting $Z=\max\{z_{ij}\}$ gives the desired claim.
Furthermore, one can easily see that $n(\gamma,\eta)=Z$.

From this claim it is now clear that $\eta^r\gamma^z\eta^{-r}\in\Gamma$ if and only if
$$\pi_{ij}^{(r\beta_{ij})}(\gamma^z)=\overline{\id}\in\overline{\mathcal{G}_{ij}^{red}}(\mathcal{O}_{K_v}/v^{r\beta_{ij}}\mathcal{O}_{K_v}),$$
for all $(i,j)$ in $T$ and $z$ some multiple of $Z$.
To control the function $n(\gamma,\eta^r)$ we use this latter criteria, which implies that $z=n(\gamma,\eta^r)$ if and only if the following three conditions are satisfied
\begin{itemize}
\item $z$ is some multiple of $Z$,
\item $\pi_{ij}^{(r\beta_{ij})}(\gamma^z)=\overline{\id}\in\overline{\mathcal{G}_{ij}^{red}}(\mathcal{O}_{K_v}/v^{r\beta_{ij}}\mathcal{O}_{K_v})$, for all $(i,j)\in T$,
\item $z=\min\{k\in\N\mid \pi_{ij}^{(r\beta_{ij})}(\gamma^k)=\overline{\id}\in\overline{\mathcal{G}_{ij}^{red}}(\mathcal{O}_{K_v}/v^r\mathcal{O}_{K_v})\}$, for some $(i,j)\in T$.
\end{itemize}
Lemma \ref{exponentlemma} now implies that there exists some $R$ such that for $r\ge R$, the order of $\pi^{(r)}(\gamma^z)$ in $\ker(\overline{\pi}_{ij}^{(r)})$ is $1$ or $q_v$.
Consequently, Scholium \ref{exponentcorollary} implies that for fixed $(i,j)$ in $T$ and large enough $r$, the order of any element in the kernel of the function
$$\overline{\pi}_{ij}^{((r+1)\beta_{ij})}=\overline{\pi}_{ij}^{(r\beta_{ij}+1)}\circ\dots\circ\overline{\pi}_{ij}^{((r+1)\beta_{ij}-1)},$$
is $1$ or $q_v^{\beta_{ij}}$.
Scholium \ref{exponentcorollary} also gives that for large enough $r$, the minimality condition is satisfied for some $(i,j)$ in $T$ (unless of course $\pi_{ij}^{(r)}(\gamma^z)$ is trivial for all $r$ and all $(i,j)\in T$).
Thus the order of any element jointly in the kernels of each $\pi_{ij}^{(r\beta_{ij})}$ is the same as the order of any element in the kernel of
$$\prod_{(i,j)\in T}\overline{\pi}_{ij}^{(r+1)}\circ\dots\circ\overline{\pi}_{ij}^{(r+\beta_{ij})}.$$
Clearly this is $1$ or $q_v^{\omega_\eta}$ for some $\omega_\eta$ dividing $\beta_\eta$.
We now see that for $r\ge R$, $n(\gamma,\eta^{r+1})/n(\gamma,\eta^{r})$ is $q_v^{\epsilon_r\omega_\eta}$, completing the proof.
\end{proof}


\subsection{Guaranteeing primitivity.}

\noindent We now turn to proving (2) in Theorem \ref{primepowerseqthm}.
In particular we show that given an absolutely primitive $\gamma$, the elements $\theta_{r}=\eta^r\gamma^{n(\gamma,\eta^r)}\eta^{-r}$ are indeed primitive.
To this end we need the following generalization of a statement from \cite[p. 20]{LaMcR} to our definition of absolutely primitive.

\begin{lemma}
If $\gamma$ is absolutely primitive then the elements $\theta_{r}$ are primitive in $\Gamma$ for all $r$.
\end{lemma}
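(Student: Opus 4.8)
The plan is to show that if $\theta_r = \eta^r \gamma^{n(\gamma,\eta^r)}\eta^{-r}$ were an $m$th power of some $\delta \in \Gamma$ for $m \ge 2$, then this would force $\gamma$ to fail the absolute primitivity condition. The key observation is that conjugation does not change eigenvalues: the eigenvalues of $\theta_r$ are exactly $\lambda_{\gamma,1}^{n},\dots,\lambda_{\gamma,n}^{n}$, where $n = n(\gamma,\eta^r)$. So I would first record that $p_{\theta_r}(z) = Q(\lambda_{\gamma,1}^{n},\dots,\lambda_{\gamma,n}^{n})$, and that $\theta_r \in \Gamma^{hyp}$ (it is conjugate to a power of a hyperbolic element, hence semisimple, and hyperbolic since $\gamma$ is), so $p_{\theta_r}(z) \in \mathcal{P}$.

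Next I would suppose $\theta_r = \delta^m$ with $\delta \in \Gamma$ and $m \ge 2$. Since $\delta$ is a matrix whose $m$th power is $\theta_r$, $\delta$ is also semisimple (as $\theta_r$ is semisimple and $\delta$ commutes with it — more carefully, a root of a semisimple matrix is semisimple), and its eigenvalues are $m$th roots of the eigenvalues of $\theta_r$, suitably ordered: after relabeling, $\lambda_{\delta,i}^m = \lambda_{\gamma,i}^{n}$ for all $i$, hence $\lambda_{\delta,i} = \zeta_i \lambda_{\gamma,i}^{n/m}$ for appropriate roots of unity $\zeta_i$ (one must be slightly careful here: the natural thing is that $\delta \in \Gamma^{hyp}$ — it is hyperbolic because $\theta_r$ is and $\delta_h^m = (\delta^m)_h = (\theta_r)_h \ne 1$ — so $p_\delta \in \mathcal{P}$). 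The subtle point, and what I expect to be the main obstacle, is bridging the gap between "$\theta_r$ has an $m$th root in $\Gamma$" and the literal statement in the definition of absolutely primitive, namely that $Q(\lambda_{\gamma,1}^{1/m},\dots,\lambda_{\gamma,n}^{1/m}) \in \mathcal{P}$ for some choice of $m$th roots. Having $\delta$ with $\lambda_{\delta,i}^m = \lambda_{\gamma,i}^n$ gives a $k$th root relationship for $k = m$ only if $n = 1$; in general it shows $\lambda_{\gamma,i}$ is a ratio of an $(m/\gcd)$-type power, and one needs to track how roots of unity and the exponent $n$ interact.

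To handle this cleanly I would argue as follows. Because $\gamma$ is absolutely primitive, Lemma \ref{absprimlemma} is not directly needed, but the \emph{definition} is: I claim $n = n(\gamma,\eta^r)$ and $m$ are coprime in the relevant sense, or rather I would directly produce, from $\delta$, an element of $\Gamma^{hyp}$ witnessing that the max in the absolute-primitivity definition for $\gamma$ exceeds $1$. Concretely: let $d = \gcd(m,n)$ and write $m = d m'$, $n = d n'$ with $\gcd(m',n')=1$. From $\lambda_{\delta,i}^{m} = \lambda_{\gamma,i}^{n}$ we get $\lambda_{\delta,i}^{m'} $ and $\lambda_{\gamma,i}^{n'}$ agree up to a root of unity; choosing integers $a,b$ with $a m' + b n' = 1$, the element $\gamma^{b}\cdot(\text{a suitable power of }\delta)$ — say $\sigma = \delta^{a}\gamma^{b}$, which lies in $\Gamma^{hyp}$ provided it is hyperbolic — has eigenvalues which are $m'$th roots (times roots of unity, which can be absorbed since a power of $\sigma$ lies in $\Gamma$ and kills them, then re-extract) of $\lambda_{\gamma,i}$. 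When $m \ge 2$ and $n$ is not a multiple of $m$, $m' \ge 2$, and this contradicts absolute primitivity of $\gamma$. The remaining case is $m \mid n$, i.e. $n = mn'$; but then $\theta_r = \delta^m$ and $\gamma^{n'}$ has eigenvalues $\lambda_{\gamma,i}^{mn'}$ — wait, these are $m$th powers of $\lambda_{\gamma,i}^{n'}$, which is consistent and does not itself contradict primitivity. This is exactly the case that needs the extra input: here I would invoke part (1) of Theorem \ref{primepowerseqthm} together with the minimality defining $n(\gamma,\eta^r)$. Since $\gamma^{n'} \in \Gamma$ already (as $m \mid n$ and $\gamma^n \in \eta^{-r}\Gamma\eta^r$... ) — actually the cleanest route in this last case: if $m \mid n(\gamma,\eta^r)$ then $\eta^r\gamma^{n/m}\eta^{-r}$ would be in $\Gamma$ by the structure of $n(\gamma,\eta^{r+1})/n(\gamma,\eta^r)$ being a prime power $q_v^{\epsilon\omega_\eta}$ from Proposition \ref{reductionprop}, contradicting minimality of $n(\gamma,\eta^r)$ unless $m = 1$. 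So the main obstacle is organizing these two cases — root-of-unity bookkeeping in one, and minimality-of-$n$ bookkeeping in the other — into a single coherent argument, and I would expect the author's proof to streamline this, likely by working directly with characteristic polynomials and the set $\mathcal{P}$ rather than matrices.
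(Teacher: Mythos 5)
Your proposal follows essentially the same route as the paper: diagonalize $\theta_r$, equate eigenvalues $\lambda_{\mu,i}^m=\lambda_{\gamma,i}^{n(\gamma,\eta^r)}$, split on whether $m\mid n(\gamma,\eta^r)$, run a Bezout argument producing an element of $\Gamma^{hyp}$ whose characteristic polynomial is $Q(\lambda_{\gamma,1}^{1/k},\dots,\lambda_{\gamma,n}^{1/k})$ with $k>1$ when $m\nmid n$, and appeal to the minimality in the definition of $n(\gamma,\eta^r)$ when $m\mid n$. Two small corrections to align with the paper's computation: the Bezout relation you want is $an'+bm'=1$ (equivalently the paper's $sm+tn=d$), not $am'+bn'=1$, so that $\lambda_\delta^a\lambda_\gamma^b=\lambda_\gamma^{(an'+bm')/m'}=\lambda_\gamma^{1/m'}$; and in the $m\mid n$ case the paper does not need Proposition~\ref{reductionprop} or the $q_v^{\epsilon\omega_\eta}$ structure — it only uses that $\mu^m=(\eta^r\gamma^{n/m}\eta^{-r})^m$ with $\mu\in\Gamma$ contradicts the minimality built into $n(\gamma,\eta^r)$ unless $m=1$.
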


\begin{proof}
For fixed $r$, we write $\theta=\theta_{r}$ for notational convenience.
Assume that $\theta=\mu^m$ for some $\mu\in\Gamma$, then we show that $m=1$.
Upon diagonalizing $\theta$ by some $D\in\GL(n,\C)$, we see that
$$
\begin{pmatrix}
\lambda_{\gamma,1}^{n(\gamma,\eta^r)}&&\\
&\ddots&\\
&&\lambda_{\gamma,n}^{n(\gamma,\eta^r)}
\end{pmatrix}
=
\begin{pmatrix}
\lambda_{\theta,1}&\\
&\ddots&\\
&&\lambda_{\theta,n}
\end{pmatrix}
=
\begin{pmatrix}
\lambda_{\mu,1}^m&&\\
&\ddots&\\
&&\lambda_{\mu,n}^{m}
\end{pmatrix}.$$
We claim that the absolute primitivity condition guarantees that all of the $\lambda_{\mu,i}$ are some integral power of the $\lambda_{\gamma,i}$.
For contradiction assume otherwise, i.e. that $\gcd(m,n(\gamma,\eta^r))\neq m$.
First note that if
$$\gcd(m,n(\gamma,\eta^r))=n(\gamma,\eta^r)\neq m,$$
then $\lambda_{\mu,i}=\lambda_{\gamma,i}^{1/k}$ for some $k>1$.
This contradicts the absolute primitivity of $\gamma$, hence we may assume that $\gcd(m,n(\gamma,\eta^r))=d<m$.
As a result, $sm+tn(\gamma,\eta^r)=d$ for some integers $s$ and $t$.
Since $\mu$ and $\gamma$ have the same eigenvectors, $\gamma^{s}\mu^{t}$ is a member of $\Gamma^{hyp}$ with eigenvalues
$$\lambda^s_{\gamma,i}\lambda^{t}_{\mu,i}=\lambda^s_{\gamma,i}\lambda^{tn(\gamma,\eta^r)/m}_{\gamma,i}=\lambda^{d/m}_{\gamma,i}=\lambda^{1/k}_{\gamma,i},$$
for some $k>1$.
Once again this contradicts the absolute primitivity of $\gamma$.
Consequently $m\mid n$, so we may write $n=mk$ for some $k\in \N$.
However construction of the function $n(-,-)$ implies we are now done since
$$\mu^m=\eta^r\gamma^{n(\gamma,\eta^r)}\eta^{-r}=(\eta^r\gamma^k\eta^{-r})^m.$$
As $\mu\in\Gamma$ this gives that $m=1$ and $k=n$.
\end{proof}


\section{The proof of Theorem \ref{ProgTheorem} for $\Gamma=G(\mathcal{O}_K)$.}\label{ProgTheoremSection}

\noindent We now turn to the task of constructing arithmetic progressions in the primitive length spectrum for the specific case of $\Gamma=G(\mathcal{O}_K)$.
To do so, we first show that we may use Theorem \ref{primepowerseqthm} to construct subsets of a very particular form in the primitive length spectrum.
Namely, we prove the following.

\begin{prop}\label{primepowerlengprop}
For every absolutely primitive $\gamma\in\Gamma^{hyp}$ and every $v\in S$, there exists subsets of the form
$$\{\ell,~q_v\ell,~q_v^{2}\ell,\dots\}\subset\mathcal{L}_p(M),$$
in the primitive length spectrum, where $\ell=\leng(\gamma)$.
\end{prop}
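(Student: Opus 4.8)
The plan is to conjugate powers of $\gamma$ back into $\Gamma$ by admissible elements of $\Comm(\Gamma)$ and then translate the resulting primitive conjugacy classes into geodesic lengths via the Prasad--Rapinchuk dictionary.

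Fix $v\in S$ and choose an admissible $\eta$ with $\beta_\eta=1$; such an $\eta$ can be produced in each classical type as a suitable product of the $\xi_{v,k}$ with half‑integer exponents, arranged so that $\eta\in\mathcal{G}$. Since $\omega_\eta\mid\beta_\eta=1$, Theorem \ref{primepowerseqthm}(1) gives a threshold $R$ with $n(\gamma,\eta^{r+1})=q_v^{\epsilon_r}n(\gamma,\eta^r)$ for all $r\geq R$, where $\epsilon_r\in\{0,1\}$. Set $a_r:=n(\gamma,\eta^r)$ and $\theta_r:=\eta^r\gamma^{a_r}\eta^{-r}$. By construction $\theta_r\in\Gamma$, and by Theorem \ref{primepowerseqthm}(2) --- using that $\gamma$ is absolutely primitive --- $\theta_r$ is primitive in $\Gamma$, so $c_{\theta_r}$ is a primitive closed geodesic of $M$. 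As $\theta_r$ is $\mathcal{G}$‑conjugate to $\gamma^{a_r}$, the two elements have the same eigenvalues and the same value of the normalizing integer appearing in the length formula; since that formula depends only on the $\mathcal{G}$‑conjugacy class, it follows that $\leng(c_{\theta_r})=\leng(c_{\gamma^{a_r}})=a_r\,\ell$ by the corollary $\leng(c_{\gamma^m})=m\,\leng(c_\gamma)$.

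Next I would show $a_r\to\infty$. The condition $\eta^r\gamma^m\eta^{-r}\in\Gamma=G(\mathcal{O}_K)$ forces the entries of $\gamma^m$ in the root positions recorded by $\eta$ to have $v$‑adic valuation at least $r\beta_{ij}$; since $\gamma$ is hyperbolic one may pick the conjugacy‑class representative of $\gamma$ and the admissible $\eta$ so that $\gamma$ does not lie in the parabolic cut out by those root positions, so those entries of $\gamma$ do not all vanish, and then no bounded $m$ can meet the divisibility requirement once $r$ is large. Combining $a_r\to\infty$ with Theorem \ref{primepowerseqthm}(1): for $r\geq R$ the sequence $(a_r)$ is nondecreasing, is multiplied by $q_v$ infinitely often, and at each step is multiplied by $1$ or $q_v$, so it runs through every term of the geometric progression $a_R,\;q_v a_R,\;q_v^{2}a_R,\dots$. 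Together with the length computation this gives
$$\{\,a_R\,\ell,\;q_v\,a_R\,\ell,\;q_v^{2}\,a_R\,\ell,\;\dots\,\}\ \subseteq\ \mathcal{L}_p(M).$$

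Finally one must arrange that the base of the progression is $\ell=\leng(\gamma)$ itself, i.e.\ that with the right choices $a_R$ is a power of $q_v$ --- equivalently that the sequence $a_0,a_1,a_2,\dots$ passes through $1,q_v,q_v^{2},\dots$, the term $a_0=1$ being supplied already by $\theta_0=\gamma$. This is the delicate step, and the one I expect to be the main obstacle: $a_1=n(\gamma,\eta)$ is essentially the order of the image of $\gamma$ in the reductive parahoric quotients $\overline{\mathcal{G}_{ij}^{red}}(f_v)$ furnished by Lemma \ref{buildinglemma}, which is not \emph{a priori} a power of $q_v$, so one must exploit the explicit $\SL_2$‑ or $\GL_2$‑shape of those quotients to choose the root positions (and, if necessary, the $\Gamma$‑conjugate of $\gamma$) so that this order becomes a $q_v$‑power. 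Everything else --- the length bookkeeping and the divergence $a_r\to\infty$ --- is routine once Theorem \ref{primepowerseqthm} is in hand, which is the real input.
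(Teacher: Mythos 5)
Your proposal follows the paper's approach: construct an admissible $\eta$ with $\beta_\eta=1$ after a mild $\Gamma$-conjugation of $\gamma$, apply Theorem \ref{primepowerseqthm} to obtain primitive elements $\theta_r=\eta^r\gamma^{n(\gamma,\eta^r)}\eta^{-r}$ with $\leng(c_{\theta_r})=n(\gamma,\eta^r)\ell$, and show that $n(\gamma,\eta^r)\to\infty$ so that the successive ratios eventually equal $q_v$ infinitely often. This is exactly the structure of Lemma \ref{unboundedlemma} and the surrounding discussion.

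Two points deserve comment. First, the ``main obstacle'' you flag --- normalizing the base of the progression to be $\ell$ itself --- is not actually cleared by the paper's argument either. Theorem \ref{primepowerseqthm}(1) controls the ratio $n(\gamma,\eta^{r+1})/n(\gamma,\eta^r)$ only for $r$ past a threshold $R$, so what Lemma \ref{unboundedlemma} really produces is $\{C\ell,\,Cq_v\ell,\,Cq_v^2\ell,\dots\}\subset\mathcal{L}_p(M)$ with $C=n(\widehat\gamma,\eta^R)$; the stated normalization in Proposition \ref{primepowerlengprop} is thus a mild overstatement, harmless because Proposition \ref{progglueinglemma} and the proofs of Theorems \ref{ProgTheorem} and \ref{EveryPrimTheorem} all carry the explicit constant $C$ and only require multiples of $\ell$. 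Also note your claimed equivalence is not one: even if $a_R=q_v^j$, you would still have to exhibit $\ell,q_v\ell,\dots,q_v^{j-1}\ell$ separately, since the small-$r$ ratios need not be $q_v$-powers. Second, your unboundedness step is weaker than what is needed: you require that the chosen entries of $\gamma$ itself not vanish, but the divisibility condition $\eta^r\gamma^m\eta^{-r}\in\Gamma$ constrains the entries of the \emph{powers} $\gamma^m$. A nondegeneracy condition on $\gamma$ alone does not prevent some $\gamma^m$ from landing back in the relevant positions. The paper (Lemma \ref{unboundedlemma}) therefore passes to a $\Gamma$-conjugate $\widehat\gamma$ no power of which is diagonal, and then makes a type-by-type choice of the indices $k,k'$ (hence of $\eta$) so that vanishing of $\widehat\gamma^m$ at the $T$-positions would force $\widehat\gamma^m$ to be diagonal; you need that stronger hypothesis to make the contradiction go through.
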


\noindent Indeed, this will follow from Theorem \ref{primepowerseqthm} once we show that for each absolutely primitive $\gamma$, there is an admissible $\eta$ such that $\beta_\eta=1$ and such that the $n(\gamma,\eta^r)$ is unbounded.
However, note that this does not directly imply Theorem \ref{ProgTheorem} as these subsets are fairly far from being genuine arithmetic progressions.
In the latter part of this section, we go on to patch such subsets together by proving the following.

\begin{prop}\label{progglueinglemma}
Let $\gamma$ be absolutely primitive and $c_\gamma$ the associated geodesic with $\ell=\leng(c_\gamma)$.
Then for any $v,v'\in S$, there exists some $C\in\N$ such that
$$\{Cq_v^{t}q_{v'}^{t'}\ell\}_{(t,t')\in\N^2}\subset\mathcal{L}_p(M),$$
where our notation means that $t$ and $t'$ may vary over all natural numbers.
\end{prop}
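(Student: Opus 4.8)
The plan is to conjugate $\gamma$ simultaneously by admissible elements attached to $v$ and to $v'$, to check that the resulting elements stay primitive, and then to read the two-parameter grid off the multiplicative behaviour of $n(-,-)$. Following the proof of Proposition \ref{primepowerlengprop}, fix an admissible $\eta=\eta_{v}$ with $\beta_{\eta}=1$ for which $n(\gamma,\eta^{r})$ is unbounded; since $\omega_{\eta}\mid\beta_{\eta}=1$, Theorem \ref{primepowerseqthm}(1) makes $\{n(\gamma,\eta^{r})\}_{r\ge R}$ nondecreasing with consecutive ratios in $\{1,q_{v}\}$, so it assumes \emph{exactly} the values $\{q_{v}^{a}N_{0}\}_{a\ge 0}$ with $N_{0}:=n(\gamma,\eta^{R})$. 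Fix such an $\eta$, and likewise $\eta'=\eta_{v'}$ for $v'$, with constant $N_{0}'$. Every $\xi_{\bullet,k}$ is diagonal, so $\eta$ and $\eta'$ commute and $\zeta_{r,r'}:=\eta^{r}(\eta')^{r'}$ lies in $\Comm(\Gamma)$.

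Next I would compute $n(\gamma,\zeta_{r,r'})$. Conjugation by $\zeta_{r,r'}$ scales the $(i,j)$ entry by $v^{-r\beta_{ij}}(v')^{-r'\beta'_{ij}}$, and since $v\neq v'$ are distinct primes of $\mathcal{O}_{K}$ the $v$-adic and $v'$-adic valuations are independent; hence $\zeta_{r,r'}\gamma^{j}\zeta_{r,r'}^{-1}\in\Gamma$ if and only if both $\eta^{r}\gamma^{j}\eta^{-r}\in\Gamma$ and $(\eta')^{r'}\gamma^{j}(\eta')^{-r'}\in\Gamma$. As the set of such exponents $j$ is a subgroup of $\Z$ for any commensurator element, this gives $n(\gamma,\zeta_{r,r'})=\lcm\bigl(n(\gamma,\eta^{r}),n(\gamma,(\eta')^{r'})\bigr)$. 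Put $\theta_{r,r'}:=\zeta_{r,r'}\gamma^{\,n(\gamma,\zeta_{r,r'})}\zeta_{r,r'}^{-1}\in\Gamma^{hyp}$. I would show it is primitive in $\Gamma$ by rerunning the proof of the primitivity lemma of Section \ref{primelengthsection} with $\eta^{r}$ replaced by $\zeta_{r,r'}$: that argument uses only that conjugation by the commensurator element preserves a common eigenbasis (automatic, $\zeta_{r,r'}$ being diagonal), the minimality defining $n(\gamma,-)$, and the absolute primitivity of $\gamma$, all of which persist. Since $\theta_{r,r'}$ is primitive with the same eigenvalues as $\gamma^{n(\gamma,\zeta_{r,r'})}$, the length computation in the proof of Proposition \ref{primepowerlengprop} gives $\leng(c_{\theta_{r,r'}})=n(\gamma,\zeta_{r,r'})\,\ell$, whence
$$\left\{\,\lcm(q_{v}^{a}N_{0},\,q_{v'}^{a'}N_{0}')\cdot\ell\ :\ a,a'\ge 0\,\right\}\subset\mathcal{L}_{p}(M).$$

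It remains to extract the grid. Writing $q_{v}=p^{k}$, $q_{v'}=(p')^{k'}$ and separating the $p$-, $p'$- and prime-to-$pp'$ parts of $N_{0}$ and $N_{0}'$, one fixes large $w,w'$ and, for each $(t,t')$, takes $a=t+w$ and $a'=t'+w'$; then $\lcm(q_{v}^{a}N_{0},q_{v'}^{a'}N_{0}')=C\,q_{v}^{t}q_{v'}^{t'}$ for the single constant $C:=q_{v}^{w}q_{v'}^{w'}\cdot p^{\,\ord_{p}(N_{0})}(p')^{\,\ord_{p'}(N_{0}')}\cdot\prod_{\rho\neq p,p'}\rho^{\max(\ord_{\rho}(N_{0}),\ord_{\rho}(N_{0}'))}$, giving $\{C\,q_{v}^{t}q_{v'}^{t'}\ell\}_{(t,t')}\subset\mathcal{L}_{p}(M)$. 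The real work lies in two places. First, confirming that the \emph{combined} conjugation still outputs primitive elements: this is precisely where the absolute primitivity of $\gamma$ enters, and it is what lets the primitivity lemma transfer. Second --- and I expect this to be the main obstacle --- the final combinatorial step, where one must verify that taking the $\lcm$ of the two one-parameter sequences really does sweep out a full two-parameter geometric grid; this is transparent when $q_{v}$ and $q_{v'}$ are coprime (for instance when $v,v'$ lie over distinct rational primes) and is the one point not formally implied by Theorem \ref{primepowerseqthm}.
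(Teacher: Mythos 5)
Your proposal is correct and follows essentially the same route as the paper: both use the admissible elements $\eta_v,\eta_{v'}$ and the conjugate $\widehat\gamma$ from Lemma \ref{unboundedlemma}, the Chinese remainder identity $n(\widehat\gamma,\eta_v^{r}\eta_{v'}^{r'})=\lcm\bigl(n(\widehat\gamma,\eta_v^{r}),n(\widehat\gamma,\eta_{v'}^{r'})\bigr)$, and Theorem \ref{primepowerseqthm}(2) to keep the resulting elements primitive. The one place you add something is the explicit prime-factorization step at the end, which the paper compresses by asserting $n(\widehat\gamma,\eta_{r,r'})=Cq_v^{r-R}q_{v'}^{r'-R}$ outright; your observation that clearing the lcm really uses that $q_v$ and $q_{v'}$ are powers of distinct rational primes is accurate, and is consistent with the way Proposition \ref{progglueinglemma} is ultimately invoked in the proof of Theorem \ref{ProgTheorem}, where the $q_{v_{p_i}}$ are distinct primes supplied by the Green--Tao theorem.
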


\noindent An iterative application of Proposition \ref{progglueinglemma} for well chosen elements of $S$ will then yield Theorem \ref{ProgTheorem} for $G(\mathcal{O}_K)$.
We first prove Proposition \ref{primepowerlengprop}, which follows immediately from the following.

\begin{lemma}\label{unboundedlemma}
Let $\gamma\in\Gamma^{hyp}$ be absolutely primitive, let $v\in S$, and let $\ell=\leng(c_\gamma)$.
Then there exists an absolutely primitive element $\widehat\gamma\in\Gamma^{hyp}$, independent of $v$, such that $\leng(c_{\widehat\gamma})=\ell$ and such that there is a choice of admissible $\eta$ with the properties that $\beta_\eta=1$ and $n(\widehat\gamma,\eta^{r})$ is unbounded as $r\to\infty$.
\end{lemma}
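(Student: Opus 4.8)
The plan is to produce $\widehat\gamma$ by a twist of $\gamma$ that puts enough of its matrix entries into the maximal ideal $v\mathcal{O}_{K_v}$ so that the integer $n(\widehat\gamma,\eta^r)$ is forced to grow, while preserving both the length of the associated geodesic and absolute primitivity. First I would fix a place $v\in S$ and an admissible $\eta$ of the form $\eta=\prod_k\xi_{v,k}^{\alpha_k}$ whose exponents $\alpha_k$ are chosen, within each classical type, so that $\eta\in\mathcal{G}$ under restriction of scalars and so that all the resulting $\beta_{ij}$ lie in $\{-1,0,1\}$; this makes $\beta_\eta=\lcm\{\beta_{ij}\mid\beta_{ij}>0\}=1$, as required. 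The existence of such a choice is a case check through the explicit matrix realizations listed in Section \ref{NotationSection}: in each case the simple roots correspond to differences of adjacent coordinate characters, so the standard ``adjacent'' $\xi_{v,k}$'s scale off-diagonal entries by $v^{\pm1}$, and one verifies the admissibility conditions (integrality and constancy on root subgroups) hold for this choice.

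Next I would define $\widehat\gamma$. The natural candidate is $\widehat\gamma=\eta_0\gamma\eta_0^{-1}$ for a suitable $\eta_0$ in the commensurator (or, in the worst case, a power $\gamma^{n_0}$ that already lies in a sufficiently deep congruence subgroup), chosen so that $\widehat\gamma\in\Gamma^{hyp}$ and so that for the given admissible $\eta$ the set $T=\{(i,j)\mid\beta_{ij}>0\}$ is nonempty and the reduction $\pi^{(1)}_{ij}(\widehat\gamma)$ is \emph{not} the identity for at least one $(i,j)\in T$ — equivalently, $n(\widehat\gamma,\eta)>Z$ is not forced to collapse. Conjugation by an element of $\Comm(\Gamma)$ does not change eigenvalues, hence leaves the characteristic polynomial, the set $\Phi(\mathcal{G},T)$ and all $|\alpha(\cdot)|$ unchanged, so $\leng(c_{\widehat\gamma})=\leng(c_\gamma)=\ell$ by Prasad--Rapinchuk's length formula, and $\widehat\gamma$ is absolutely primitive precisely because absolute primitivity is a condition on eigenvalues alone. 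The key point is then unboundedness of $n(\widehat\gamma,\eta^r)$: by the proof of Proposition \ref{reductionprop}, $n(\widehat\gamma,\eta^r)$ is bounded in $r$ if and only if $\pi^{(r\beta_{ij})}_{ij}(\widehat\gamma^{z})$ is eventually trivial for every $(i,j)\in T$ and the relevant $z$, i.e. if and only if all the entries $\widehat\gamma_{ij}$ with $(i,j)\in T$ actually vanish (not merely reduce to zero mod $v$). So it suffices to arrange that some entry $\widehat\gamma_{ij}$ with $\beta_{ij}>0$ is a nonzero element of $\mathcal{O}_K$ divisible by arbitrarily low powers of $v$ — automatic as soon as it is nonzero, since then its $v$-adic valuation is finite but the required divisibility $v^{r\beta_{ij}}\mid\widehat\gamma_{ij}^{?}$ grows without bound. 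Concretely I would pick $\widehat\gamma$ so that it is not contained in the torus-type subgroup on which $\eta$ acts trivially; a generic hyperbolic element of $\Gamma$ has nonzero entries everywhere, and if $\gamma$ itself is unlucky one replaces it by a conjugate $\eta_0\gamma\eta_0^{-1}$ with $\eta_0\in\Comm(\Gamma)$ spoiling the vanishing while keeping the element semisimple and hyperbolic.

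The main obstacle I anticipate is the type-by-type bookkeeping: one must simultaneously (a) exhibit, for each of the six classical $\R$-forms with its given matrix realization, an admissible $\eta$ with $\beta_\eta=1$ and $T\neq\varnothing$, and (b) guarantee that $\widehat\gamma$ can be taken with a nonzero entry in some position of $T$ — i.e. that the subgroup of $\mathcal{G}$ on which $\eta$ acts with weight zero does not contain a conjugate of $\langle\gamma\rangle$, which could conceivably fail for special $\gamma$ and must be repaired by the commensurator twist. Checking that the twist can always be performed inside $\Comm(\Gamma)$ while staying in $\mathcal{G}$ (hence respecting the restriction-of-scalars structure) and producing a still-hyperbolic, still-absolutely-primitive element is the delicate part; everything after that — the unboundedness and the invariance of $\ell$ — is immediate from Theorem \ref{primepowerseqthm} and the length formula. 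Proposition \ref{primepowerlengprop} then follows: with $\beta_\eta=1$ and $n(\widehat\gamma,\eta^r)$ unbounded, part (1) of Theorem \ref{primepowerseqthm} gives $n(\widehat\gamma,\eta^{r+1})=q_v^{\epsilon_r}n(\widehat\gamma,\eta^r)$ with $\epsilon_r\in\{0,1\}$ and infinitely many $\epsilon_r=1$, so the lengths $n(\widehat\gamma,\eta^r)\,\ell$ realized by the primitive geodesics $c_{\theta_r}$ (primitive by part (2)) run through $\{\ell,q_v\ell,q_v^2\ell,\dots\}$.
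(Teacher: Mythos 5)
Your overall strategy — move to a conjugate $\widehat\gamma$ with a nonvanishing entry in a position that an admissible diagonal $\eta$ can scale, then deduce unboundedness of $n(\widehat\gamma,\eta^r)$ by noting that boundedness would force those entries (of the relevant power of $\widehat\gamma$) to vanish — is the same as the paper's. But there are two concrete gaps. First, you propose $\widehat\gamma=\eta_0\gamma\eta_0^{-1}$ with $\eta_0\in\Comm(\Gamma)$, ``or, in the worst case, a power $\gamma^{n_0}$.'' Conjugation by a commensurator element does \emph{not} keep you in $\Gamma=G(\mathcal{O}_K)$ in general, so $\widehat\gamma\in\Gamma^{hyp}$ is not guaranteed; and replacing $\gamma$ by a proper power changes the length and destroys absolute primitivity (since $Q(\lambda_{\gamma,i})\in\mathcal{P}$ then witnesses $k=n_0>1$ for $\gamma^{n_0}$), so that alternative is not available. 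The paper sidesteps both problems by conjugating $\gamma$ by an element of $\Gamma$ itself: this obviously stays in $\Gamma$, preserves eigenvalues (hence length and absolute primitivity), and a $\Gamma$-conjugate with no diagonal power exists. Second, the lemma explicitly requires $\widehat\gamma$ to be \emph{independent of $v$}. In your scheme you fix $v$ and an $\eta$ (which depends on $v$) first, and then choose $\eta_0$ to make entries in $T(\eta)$ nonzero, so your $\widehat\gamma$ a priori depends on $v$. The paper reverses the order: it first chooses $\widehat\gamma$ (any $\Gamma$-conjugate of $\gamma$ none of whose powers is diagonal, with a nonzero off-diagonal entry at a position $(k,k')$ satisfying some type-dependent constraints), which makes no reference to $v$, and \emph{then} builds $\eta$ for each $v$ to scale the $(k,k')$ entry. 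This ordering is what delivers the required $v$-independence.

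A smaller point: your claim that one can always pick $\eta$ with all $\beta_{ij}\in\{-1,0,1\}$ simultaneously in every classical realization is stated but not verified, and in the form-preserving cases (types $^2A$, $B$, $^1D$, $C$, $^2D$) the constraints imposed by the bilinear/Hermitian form on the diagonal exponents make this a genuine case check, which the paper carries out explicitly (including the extra restrictions that $k,k'$ must avoid certain positions so that the required $\eta$ lies in $\mathcal{G}$ and has $\beta_\eta=1$). Your plan correctly identifies that the length formula of Prasad--Rapinchuk and the eigenvalue-only nature of absolute primitivity make these properties conjugation-invariant, and your unboundedness heuristic is right in spirit, but as written it talks about $\widehat\gamma_{ij}$ rather than the entry of the relevant power $\widehat\gamma^{z}$, which is what the argument actually concerns. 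Fixing the conjugation to be inside $\Gamma$ and reordering the construction to make $\widehat\gamma$ manifestly $v$-independent would bring your proposal in line with a correct proof.
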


\begin{proof}
First we find $\widehat\gamma$ so that no power of $\widehat\gamma$ becomes a diagonal element.
Indeed, if this is already true of $\gamma$ then let $\widehat\gamma=\gamma$, otherwise let $\widehat\gamma$ be a conjugate of $\gamma$ by some element in $\Gamma$ such that no power is diagonal.
Now write $\widehat\gamma=(\widehat\gamma_{ij})$ and use superscripted notation to denote powers of $\widehat\gamma$, namely $\widehat\gamma^z=(\widehat\gamma^{(z)}_{ij})$.
By assumption, there exists some $k,k'\in\{1,\dots,n\}$ such that $k\neq k'$ and $\widehat\gamma_{kk'}\neq 0$.
Moreover as $\gamma$ is hyperbolic, so is $\widehat\gamma$ and consequently we may choose $k$ and $k'$ with some additional useful restrictions for certain classical types of $G$.
Namely, we have the following restrictions:
\begin{itemize}[leftmargin=5.5mm]
\item[] \textbf{$G$ is of type $^2A$, $B$, or $^1D$.} We may choose $k$ and $k'$ so that not both $k$ and $k'$ are less than or equal to $q-p$.
Indeed if this were not possible, then $\gamma$ would be an orthogonal matrix and consequently cannot be hyperbolic.
Furthermore such $k,k'$ can be chosen so that $k\neq 2q-k'+1$, as otherwise some power of $\gamma$ would be diagonal.
\item[]
\item[] \textbf{$G$ is of type $C$.} We may choose $k$ and $k'$ so that they are not mutually in the set $\{1,\dots,p,p+q+1,\dots,2p+q\}$ or its complement in $\{1,\dots,n\}$.
Again if this were not possible, then $\gamma$ could not be hyperbolic.
Furthermore such $k,k'$ can be chosen so that $k\neq 2q-k'+1$, as otherwise some power of $\gamma$ would be diagonal.
\end{itemize}
Note that length as well as the property of being absolutely primitive are conjugacy invariant so $\widehat\gamma$ is absolutely primitive and
$$\leng(c_\gamma)=\leng(c_{\widehat\gamma}).$$
It is also clear that our construction of $\widehat{\gamma}$ never used any reference to $v\in S$ and so is independent of that.
There are now several ways to build an admissible $\eta$ using Theorem \ref{primepowerseqthm}.
We give the most consistent choices herein.
\begin{itemize}[leftmargin=5.5mm]
\item[] \textbf{$G$ is of type $^2A$, $B$, or $^1D$.} We define
$$\eta=\diag(1,\dots,1,v^{m_{q-p+1}},\dots,v^{n}),$$
where the number of $1$s at the beginning of $\eta$ is $q-p$.
If $k\le p$, then the $m_i$ are chosen so that $m_{k'}=1$, $m_{2q-k'+1}=-1$, and $m_i=0$ for all $i\neq k',~2q-k'+1$.
Otherwise, we may choose $m_i$ so that $m_{k}=-1$, $m_{2q-k+1}=1$, and $m_i=0$ for all $i\neq k,~2q-k+1$.
\item[]
\item[] \textbf{$G$ is of type $C$.} Let
$$\eta=\diag(v^{m_1},\dots,v^{m_n}).$$
If $k'\le n/2$ then let $m_{k'}=1$, $m_{n/2+k'}=-1$, and the other $m_i=0$.
Otherwise, let $m_{k'}=1$, $m_{k'-n/2}=-1$, and the other $m_i=0$.
\item[]
\item[] \textbf{$G$ is of type $A_1$.} Then we define
$$\eta=\diag(v^{m_1/2},v^{m_2/2}),$$
where $(m_1,m_2)=(1,-1)$ if $(k,k')=(1,2)$ and $(m_1,m_2)=(-1,1)$ if $(k,k')=(2,1)$.
\item[]
\item[] \textbf{$G$ has any other type.} Then
$$\eta=\diag(v^{m_1},\dots,v^{m_n}),$$
where the $m_i$ are taken so that $m_k=1$, $m_{n-k+1}=-1$, and the other $m_i=0$.

\end{itemize}
It is now clear by computation in each of these cases that $\beta_\eta=1$ and that $\eta$ is admissible.
Furthermore, each admissible $\eta$ multiplies the $kk'$th coefficient of $\widehat\gamma$ by $v^{-1}$.
To see the claim that $n(\widehat\gamma,\eta^r)$ are unbounded, note that if $n(\widehat\gamma,\eta^r)$ were bounded as $r$ goes to infinity then for all $r$ larger than some fixed natural number $R$ we would have $z=n(\widehat\gamma,\eta^{r})=n(\widehat\gamma,\eta^{R})$.
Consequently, $\widehat\gamma$ would be in the kernel of $\pi_{ij}^{(r\beta_{ij})}$ for all $r>R$.
This however would imply that some power of $\widehat\gamma$ is diagonal, contradicting our choice of $\widehat\gamma$.
\end{proof}

\noindent This completes the proof of Proposition \ref{primepowerlengprop}.
We now show that these subsets can be glued together to give Proposition \ref{progglueinglemma}.

\begin{proof}[Proof of Proposition \ref{progglueinglemma}]
For fixed $\gamma$, let $\eta_v$, $\eta_{v'}$, and $\widehat\gamma$ denote the elements furnished by Lemma \ref{unboundedlemma} and define $\eta_{r,r'}=\eta_v^r\eta_{v'}^{r'}$.
Recall that $\beta_{\eta_v}=\beta_{\eta_{v'}}=1$.
By construction, there exists some $R\in\N$ such that for any $r\ge R$, $n(\widehat\gamma,\eta_v^{r+1})=q_vn(\widehat\gamma,\eta_v^{r})$ and $n(\widehat\gamma,\eta_{v'}^{r+1})=q_{v'}n(\widehat\gamma,\eta_{v'}^{r})$.
Using the Chinese remainder theorem, we obtain the relationship
$$n(\widehat\gamma,(\eta_v\eta_{v'})^R)=\lcm\{n(\widehat\gamma,\eta_v^{R}),n(\widehat\gamma,\eta_{v'}^{R})\}.$$
Calling this quantity $C$, we see that for any $r,r'\ge R$
$$n(\widehat\gamma,\eta_{r,r'})=n(\widehat\gamma,\eta_v^r\eta_{v'}^{r'})=Cq_v^{(r-R)}q_{v'}^{(r'-R)}.$$
The translation to the primitive length spectrum follows from (2) of Theorem \ref{primepowerseqthm} and the fact that $\ell=\leng(c_\gamma)=\leng(c_{\widehat\gamma})$.
\end{proof}

\noindent Before we proceed to the proof of Theorem \ref{ProgTheorem}, we need one final ingredient.

\begin{lemma}\label{PrimeDensLemma}
The set $S$ has positive natural density in the set of places of $K$.
\end{lemma}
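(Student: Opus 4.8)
The plan is to reduce the claim to a standard Chebotarev-type density computation for the number field $K$. Recall that $S$ consists of the finite places $v$ of $K$ that are unramified (in the relevant sense) and at which $G_v$ is inner and split. First I would recall that there is a finite Galois extension $L/K$ — for instance the minimal Galois extension over which $G$ becomes an inner form of a split group, i.e. the splitting field of the $*$-action on the based root datum together with a field over which a maximal torus splits — such that for all but finitely many places $v$ of $K$, the property ``$G_v$ is inner and split'' depends only on the Frobenius conjugacy class $\mathrm{Frob}_v$ in $\Gal(L/K)$, and in fact holds precisely when $\mathrm{Frob}_v$ is trivial. The finitely many excluded places (those ramifying in $L/K$, or the bad primes for the chosen integral model of $G$) form a set of density zero and hence do not affect natural density.

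Next I would invoke the Chebotarev density theorem for the extension $L/K$: the set of places $v$ of $K$ with $\mathrm{Frob}_v = 1$, i.e. the places that split completely in $L$, has natural density $1/[L:K] > 0$ among the places of $K$. Since $S$ differs from this set by a density-zero set, $S$ itself has natural density $1/[L:K] > 0$. This is the whole argument; the only real content is identifying that membership in $S$ is, up to finitely many exceptions, a Frobenius condition, and that the exceptional set is finite.

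The main obstacle — really the only point requiring care — is making precise the claim that ``$G_v$ inner and split'' is governed by the splitting behavior of a single fixed finite extension $L/K$. One must check two things: that $G_v$ being split for almost all $v$ is equivalent to $\mathrm{Frob}_v$ acting trivially on the character lattice of a maximal torus (standard, via a torus splitting field), and that $G_v$ being inner is equivalent to $\mathrm{Frob}_v$ lying in the kernel of the map $\Gal(\bar K/K) \to \Out(G)$ (again standard, since $G$ is adjoint absolutely simple and the outer form is classified by a homomorphism to the finite group $\Out(G)$ of Dynkin diagram automorphisms). Taking $L$ to be the compositum of the fixed fields cut out by these two homomorphisms, a place $v$ unramified in $L$ lies in $S$ iff it splits completely in $L$. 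Since only finitely many places ramify in $L/K$ or are bad for the model, the symmetric difference between $S$ and the set of completely split places is finite, and the conclusion follows from Chebotarev. No further computation is needed.
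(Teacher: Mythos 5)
Your argument is correct and follows essentially the same route as the paper: both identify a finite extension $L/K$ over which $G$ becomes inner, observe that for almost all places of $K$ the group $G_v$ is automatically finite, unramified, and quasi-split (so that inner implies split), and then invoke the Chebotarev density theorem for $L/K$. Your write-up is somewhat more detailed than the paper's—making explicit that $S$ coincides up to a finite set with the places splitting completely in $L$, and recording the density as $1/[L:K]$—whereas the paper only notes the one-sided inclusion of the splitting-completely places in $S$, which already suffices for positivity.
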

\begin{proof}
It suffices to show that the set of places of $K$ such that $G$ is inner has positive natural density in the set of places of $K$.
Indeed, almost all places of $K$ are finite, unramified, and have the property that $G_v$ is quasi-split over $K_v$.
If $G$ is inner to begin with then the conclusion is immediate, as in this setting split and quasi-split coincide.
Hence assume that $G$ is outer.
It is well known that $G$ becomes inner over some finite extension $L$ of $K$.
Consequently if $L$ embeds into $K_v$, then $G_v$ is inner.
One way this can happen is if $L_w=K_v$ for some place $w$ of $L$ lying over $v$, as then $L\subset L_w=K_v$.
Hence the set of all places such that $G_v$ is inner includes the set of all $v$ which split completely in $L$.
An application of the Chebotarev density theorem then gives that $S$ has positive natural density.
\end{proof}

\begin{cor}\label{PrimeDenseCor}
Let $A$ be the subset of primes in the natural numbers such that $p=q_v$ for some $v\in S$, then $A$ has positive natural density in the set of rational primes.
\end{cor}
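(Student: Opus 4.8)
The goal is to deduce Corollary \ref{PrimeDenseCor} from Lemma \ref{PrimeDensLemma}, so the work is essentially a translation between "natural density of places $v$ of $K$" and "natural density of rational primes $p$". First I would recall what $q_v = |f_v|$ is: if $v$ lies over the rational prime $p$, then $q_v = p^{f(v)}$ where $f(v) = f(v|p)$ is the residue degree of $v$ over $p$. Thus $q_v$ is a rational prime exactly when $f(v) = 1$, i.e. exactly when $v$ has residue degree one over $\Q$; and in that case $q_v = p$ is the rational prime below $v$. So the set $A$ in the statement is precisely $\{\,p \text{ rational prime} : p = q_v \text{ for some } v \in S \text{ with } f(v|p)=1\,\}$, and what I really need is that the set $S^{(1)} \subset S$ of degree-one places contributes positive density downstairs.

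\textbf{Key steps.} (1) Observe that in any number field $K$, the set of places $v$ of $K$ with residue degree $f(v) > 1$ over $\Q$ has natural density zero among all places of $K$; equivalently, density-one-many places of $K$ are degree one over $\Q$. This is standard: by Chebotarev (or already by the prime ideal theorem and a counting argument) the number of prime ideals of $\mathcal{O}_K$ of norm $\le x$ is asymptotic to $x/\log x$, and those of norm $\le x$ with $f \ge 2$ number $O(\sqrt{x}/\log x)$, hence density zero. Therefore $S^{(1)} = \{v \in S : f(v|p) = 1\}$ still has positive natural density in the set of places of $K$, equal to the density of $S$ given by Lemma \ref{PrimeDensLemma}. (2) Each $v \in S^{(1)}$ maps to the rational prime $p = q_v$ lying below it, and this gives a map $S^{(1)} \to A$. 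This map is at most $[K:\Q]$-to-one, since at most $[K:\Q]$ places of $K$ lie over a given rational prime. Hence $|A \cap [1,x]| \ge \frac{1}{[K:\Q]}\, \#\{v \in S^{(1)} : N(v) \le x\}$ for all $x$. (3) Combining with the density statement in step (1) and the prime ideal theorem for $K$ (to count $\#\{v \in S^{(1)} : N(v) \le x\} \sim \delta\, x/\log x$ for the density $\delta > 0$ of $S$) against the prime number theorem $\pi(x) \sim x/\log x$, we get $\liminf_{x\to\infty} |A \cap [1,x]|/\pi(x) \ge \delta/[K:\Q] > 0$, which is the desired positive natural density.

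\textbf{Main obstacle.} There is no serious obstacle; the only thing to be careful about is the bookkeeping that "positive density of places of $K$" survives the pushforward to rational primes. The two places this could go wrong — the degree-$\ge 2$ places and the bounded fibers of $v \mapsto q_v$ — are exactly steps (1) and (2) above, and both are elementary once one writes down the asymptotics. One could alternatively phrase the whole argument via Chebotarev directly: if $L/K$ is the finite extension over which $G$ becomes inner and $E$ is the Galois closure of $L$ over $\Q$, then the rational primes $p$ that split completely in $E$ all lie below a degree-one place $v$ of $K$ with $G_v$ inner and split, and this set of $p$ has density $1/[E:\Q] > 0$ by Chebotarev; this avoids counting entirely and is perhaps the cleanest route to write up.
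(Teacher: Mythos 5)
Your proof is correct. The paper states Corollary \ref{PrimeDenseCor} without a proof, leaving the translation from density of places of $K$ to density of rational primes implicit, so your write-up fills exactly the gap that a careful reader would want filled. The two ingredients you isolate are the right ones: (1) places of residue degree $\ge 2$ over $\Q$ contribute only $O(\sqrt{x}/\log x)$ primes of norm $\le x$ and hence have density zero, so $S$ may be replaced by its degree-one part $S^{(1)}$ without changing density; and (2) the pushforward $v\mapsto q_v$ restricted to $S^{(1)}$ has fibers of size at most $[K:\Q]$, which together with the prime ideal theorem and the prime number theorem gives $\liminf |A\cap[1,x]|/\pi(x)\ge \delta/[K:\Q]>0$. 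Both steps are sound. Your alternative route — applying Chebotarev to the Galois closure $E$ of $L/\Q$ and observing that rational primes splitting completely in $E$ lie below degree-one places $v$ of $K$ with $G_v$ inner (and, outside a finite set, unramified and split) — is arguably the cleaner one to present, since it mirrors the Chebotarev argument already used in the proof of Lemma \ref{PrimeDensLemma} and sidesteps the counting entirely; it gives the explicit lower bound $1/[E:\Q]$ for the density of $A$. Either version would be acceptable, and neither has a gap.
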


\begin{proof}[Proof of Theorem \ref{ProgTheorem}]
Let $A$ be as in Corollary \ref{PrimeDenseCor} and for each prime $p\in A$ choose some $v_p\in S$ such that $p=q_v$.
The Green--Tao theorem implies that $A$ has arbitrarily long arithmetic progressions, namely for each natural number $k$ we may find natural numbers $a,b$ such that $\{p_i\}_{i=1}^k=\{ai+b\}_{i=1}^k\subset A$.
If $\gamma_{abs}$ is an absolutely primitive element with length $\ell=\leng(c_{\gamma_{abs}})$, then an inductive application of Proposition \ref{progglueinglemma} to the $v_{p_i}$ yields a $C$ such that
$$\{C(ai+b)\ell\}_{i=1}^k\subset\{Cp_1^{t_1}\cdot~\dots~\cdot p_k^{t_k}\ell\}_{(t_1,\dots, t_k)\in\N^k}\subset\mathcal{L}_p(M).$$
This gives the requisite arithmetic progression.
\end{proof}


\section{The proofs of Theorem \ref{EveryPrimTheorem} and Theorem \ref{CommInvTheorem}.}\label{OtherThmSection}

\noindent We now move on to completing the proofs of Theorems \ref{ProgTheorem}, \ref{EveryPrimTheorem}, and \ref{CommInvTheorem}.
As with Theorem \ref{ProgTheorem}, we first exhibit Theorem \ref{EveryPrimTheorem} when $\Gamma=G(\mathcal{O}_K)$.
This will follow from a minor modification on our choice of admissible $\eta$ in the proof of Propositions \ref{primepowerlengprop} and \ref{progglueinglemma}.
We then prove Theorem \ref{CommInvTheorem}, which will follow from a combinatorial argument using Van der Waerden's theorem.
Having proved Theorems \ref{ProgTheorem} and \ref{EveryPrimTheorem} in the case of $\Gamma=G(\mathcal{O}_K)$, Theorem \ref{CommInvTheorem} in conjunction with the definition of arithmeticity will yield the general case of Theorems \ref{ProgTheorem} and \ref{EveryPrimTheorem}.

\subsection{Every primitive length occurs in an arithmetic progression for $\Gamma=G(\mathcal{O}_K)$.}

We now prove Theorem \ref{EveryPrimTheorem} in the case of $\Gamma=G(\mathcal{O}_k)$.
The proof of Theorem \ref{ProgTheorem} in the case of $\Gamma=G(\mathcal{O}_k)$ gives that every absolutely primitive geodesic occurs in an arbitrarily long arithmetic progression.
Consequently our strategy is to parlay this to all primitive geodesics.
Indeed by Lemma \ref{absprimlemma} we know that the length of every primitive geodesic is a rational multiple of an absolutely primitive one, so it suffices to show that we can find progressions which clear the denominator of this rational number.

\begin{proof}[Proof of Theorem \ref{EveryPrimTheorem}]
Fix $\ell=\leng(c_\gamma)$ to be the length of some primitive geodesic $c_\gamma$, where $\gamma\in\Gamma^{hyp}$.
Then Lemma \ref{absprimlemma} supplies an absolutely primitive element $\gamma_{abs}$ such that $\ell_{abs}=\ell/j$ for some $j\in\N$, where $\ell_{abs}=\leng(c_{\gamma_{abs}})$.
By the proof of Theorem \ref{ProgTheorem}, we know that for any $k$ there exists a $k$-term arithmetic progression in the primitive length spectrum for $\ell_{abs}$.
In specific, we know from Proposition \ref{progglueinglemma} that there exists a $C$ and a subset of the natural numbers $\{ai+b\}_{i=1}^{k}$ such that $\{C(ai+b)\ell_{abs}\}_{i=1}^{k}\subset\mathcal{L}_p(M)$.

\begin{claim}\label{denomclearclaim}
For any natural number $j$, we may always choose a progression so that $j$ divides $C$.
\end{claim}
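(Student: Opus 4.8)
The plan is to revisit the construction underlying Proposition \ref{progglueinglemma} and observe that we have a free parameter — the choice of the base exponent $R$ at which the sequences $n(\widehat\gamma,\eta_v^r)$ and $n(\widehat\gamma,\eta_{v'}^r)$ settle into their eventual multiplicative behavior — together with the freedom to throw in additional places of $S$. Recall from the proof of Proposition \ref{progglueinglemma} that the constant $C$ is produced as $C = n(\widehat\gamma,(\eta_v\eta_{v'})^R) = \lcm\{n(\widehat\gamma,\eta_v^R), n(\widehat\gamma,\eta_{v'}^R)\}$, and that by Lemma \ref{unboundedlemma} the quantities $n(\widehat\gamma,\eta_v^r)$ are unbounded in $r$. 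First I would fix the primitive length $\ell$ and the integer $j$ from the statement, then factor $j$ into prime powers and handle the primes of $j$ one at a time. For a rational prime $p$ dividing $j$, Corollary \ref{PrimeDenseCor} guarantees (since $A$ has positive density, in particular is infinite) that there are infinitely many places $v \in S$ with $q_v$ a power of $p$; pick such a $v$. Then by Lemma \ref{unboundedlemma} applied at this $v$, the integers $n(\widehat\gamma,\eta_v^r)$ are unbounded and, by part (1) of Theorem \ref{primepowerseqthm}, eventually grow by a factor of $q_v$ at each step. Hence for $R$ large enough $n(\widehat\gamma,\eta_v^R)$ is divisible by an arbitrarily high power of $p$; in particular we may take $R$ so large that $n(\widehat\gamma,\eta_v^R)$ is divisible by the exact power of $p$ appearing in $j$.

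Next I would combine these choices across all primes dividing $j$. Enumerate the primes $p_1,\dots,p_s$ dividing $j$, choose places $v_1,\dots,v_s\in S$ as above (distinct from each other and from the $v_{p_i}$ used to realize the Green--Tao progression in the proof of Theorem \ref{ProgTheorem}), and choose $R$ large enough that for every $t$ the integer $n(\widehat\gamma,\eta_{v_t}^R)$ is divisible by the $p_t$-part of $j$. Now run the iterative gluing of Proposition \ref{progglueinglemma} across the enlarged collection of places consisting of the $v_t$ together with the places realizing the arithmetic progression; the resulting constant $C$ is the least common multiple of the various $n(\widehat\gamma,\eta^R)$ over all these places, so it is divisible by the $p_t$-part of $j$ for every $t$, and therefore divisible by $j$ itself. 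Since $\ell = j\,\ell_{abs}$, the subset $\{C(ai+b)\ell_{abs}\}_{i=1}^k \subset \mathcal{L}_p(M)$ can be rewritten as $\{(C/j)(ai+b)\ell\}_{i=1}^k$, which is a genuine $k$-term arithmetic progression of integer multiples of $\ell$ sitting inside the primitive length spectrum. This establishes the claim, and hence completes the proof of Theorem \ref{EveryPrimTheorem} for $\Gamma = G(\mathcal{O}_K)$.

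The main obstacle is making sure that enlarging the set of places does not interfere with the Green--Tao step: one needs the prime $p$ for which $q_v$ is a $p$-power to be usable both to clear the denominator and, independently, to not disturb the arithmetic progression $\{ai+b\}$ already obtained. Since the progression in the proof of Theorem \ref{ProgTheorem} is extracted from the density-positive set $A$ via Green--Tao and there is no constraint forcing the auxiliary places $v_1,\dots,v_s$ to coincide with the progression-realizing places, one simply chooses them disjointly; the gluing procedure of Proposition \ref{progglueinglemma} is symmetric in its places and tolerates extra factors, so the progression $\{C(ai+b)\ell_{abs}\}$ survives with a $C$ that now additionally absorbs $j$. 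A secondary technical point is confirming that the relevant $n(\widehat\gamma,\eta_{v}^{r})$ really is divisible by a high power of $p$ and not merely by $p$ once — but this is exactly the content of the eventual $n(\widehat\gamma,\eta_v^{r+1}) = q_v\, n(\widehat\gamma,\eta_v^{r})$ behavior combined with unboundedness, so it follows from Theorem \ref{primepowerseqthm}(1) and Lemma \ref{unboundedlemma} with no further work.
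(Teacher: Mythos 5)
Your proof has a genuine gap at the very first step, when you handle a prime $p$ dividing $j$. You invoke Corollary~\ref{PrimeDenseCor} to claim there are infinitely many $v\in S$ with $q_v$ a power of $p$. But Corollary~\ref{PrimeDenseCor} only asserts that the set $A = \{p : p = q_v \text{ for some } v\in S\}$ has positive density; positive density does not imply $A$ contains every prime, and in particular does not guarantee that the given prime $p$ (coming from the factorization of $j$) belongs to $A$. Moreover, for a \emph{fixed} rational prime $p$ there are at most $[K:\Q]$ places of $K$ above $p$, so ``infinitely many'' is never available, and whether even one of them lies in $S$ is exactly the question. The condition defining $S$ (unramified, inner, split) is genuinely restrictive: for a form $G$ that is not already split over $K$, there can be primes $p$ with no place of $S$ above them, and $j$ may well be built from such primes.

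The paper resolves this by stepping \emph{outside} of $S$. For each prime $u_i$ dividing $j$, it passes to an extension $L/K$ over which $G$ is quasi-split and chooses a place $w_i$ of $L$ lying over $u_i$. It then constructs admissible $\zeta_{w_i}\in\Comm(\Gamma)$ analogously to Lemma~\ref{unboundedlemma} and re-runs the arguments of Sections~\ref{buildinglemmasection} and~\ref{controlsection} at a hyperspecial point of $\mathcal{B}_{w_i}$ to show $n(\widehat\gamma,\zeta_{w_i}^{r+1})$ is eventually $q_{w_i}^{y_{r,i}}\,n(\widehat\gamma,\zeta_{w_i}^{r})$ for some $y_{r,i}\geq 1$ (the exponent need no longer be exactly $1$ since $G_{w_i}$ is only quasi-split). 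Because $q_{w_i}$ is automatically a power of $u_i$, choosing $r_i$ large enough forces $u_i^{z_i}\mid q_{w_i}^{y_i}$, and the resulting $C$ absorbs $j$. Your gluing strategy across the primes of $j$ and the comment about leaving the Green--Tao places undisturbed are both correct; the missing ingredient is that you cannot always find your auxiliary places inside $S$, and so you need the field-extension device to produce commensurator elements at the right residue characteristics.
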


\noindent Momentarily assuming Claim \ref{denomclearclaim}, then $C=jC'$ for some natural number $C'$ and hence
$$\{C(ai+b)\ell_{abs}\}_{i=1}^{k}=\{C'(ai+b)\ell\}_{i=1}^{k}\subset\mathcal{L}_p(M),$$
as required.
\end{proof}

\begin{proof}[Proof of Claim \ref{denomclearclaim}]
Recall that in the inductive application of Proposition \ref{progglueinglemma}, the admissible $\eta$ are of the form $\eta_{v_1}^{\alpha_1}\cdots\eta_{v_s}^{\alpha_s}$ and produce the following subsets of the primitive length spectrum
$$\{C'p_1^{t_1}\cdots p_s^{t_s}\ell_{abs}\}_{(t_1,\dots,t_s)\in\N^s}\subset\mathcal{L}_p(M).$$
In this notation,
$$C'=\lcm\{n(\gamma,\eta_{v_1}^{R'}),\dots, n(\gamma,\eta_{v_s}^{R'})\},$$
for some natural number $R'$.
Now write $u^{z_1}_1 u^{z_2}_2\cdots u^{z_k}_k$ for the prime decomposition of $j$.
We claim that there exists fixed $r_1,\dots,r_k\in \N$ and $\zeta_{w_i}\in\Comm(\Gamma)$ such that the elements of the form $g_{\alpha_1,\dots,\alpha_s}=\zeta_{w_1}^{r_1}\cdot~\dots~\cdot\zeta_{w_k}^{r_k}\eta_{v_1}^{\alpha_1}\cdot~\dots~\cdot\eta_{v_s}^{\alpha_s}$ are admissible and produce subsets of the primitive length spectrum
$$\{Cp_1^{t_1}\cdots p_s^{t_s}\ell_{abs}\}_{(t_1,\dots,t_s)\in\N^s}\subset\mathcal{L}_p(M),$$
such that $j$ divides $C$.

Indeed, let $L$ be an extension of $K$ such that $G$ is quasi-split over $L$ and choose any finite place $w_i$ lying over $u_i$.
Further let $\zeta_{w_i}$ denote admissible elements built identically to those in the proof of Lemma \ref{unboundedlemma}.
Applying the arguments of Sections \ref{buildinglemmasection} and \ref{controlsection} we see that there is some $R''$, such that for all $i$ and all $r\ge R''$, $n(\widehat\gamma,\zeta_{w_i}^{r+1})$ is equal to the product of $n(\widehat\gamma,\zeta_{w_i}^{r})$ and some integral power $y_{r,i}\ge 1$ of $q_{w_i}$.
Indeed one can see from Scholium \ref{exponentcorollary} applied to a hyperspecial point in $\mathcal{B}_{w_i}$ that this must be the case.
Now for any $r_i$ such that $r_i\ge R=\max\{R',R''\}$, then applying the arguments of Proposition \ref{progglueinglemma} to the elements $\zeta_{w_1}^{r_1}\cdot~\dots~\cdot\zeta_{w_k}^{r_k}\eta_{v_1}^{\alpha_1}\cdot~\dots~\cdot\eta_{v_s}^{\alpha_s}$ constructs the subsets
$$\{C''q_{w_1}^{y_1}\cdots q_{w_k}^{y_k}p_1^{t_1}\cdots p_s^{t_s}\ell\}_{(t_1,\dots,t_s)\in\N^{s}}\subset\mathcal{L}_p(M),$$
in the primitive length spectrum, where $y_i=\sum_{j=R}^{r_i-1} y_{j,i}$.
By construction
$$C''=n(\widehat\gamma,(\zeta_{w_1}\cdots\zeta_{w_k}\eta_{v_1}\cdots\eta_{v_s})^{R})=\lcm\{n(\widehat\gamma,\zeta_{w_1}^{R}),\dots,n(\widehat\gamma,\zeta_{w_k}^{R}),n(\widehat\gamma,\eta_{1}^{R}),\dots,n(\widehat\gamma,\eta_{s}^{R})\}.$$
Now fix $r_i$ such that $r_i\ge R+z_i$, then $u_i^{z_i}\mid q_{w_i}^{y_i}$ and that for the $g_{\alpha_1,\dots,\alpha_s}=\zeta_{w_1}^{r_1}\cdot~\dots~\cdot\zeta_{w_k}^{r_k}\eta_{v_1}^{\alpha_1}\cdot~\dots~\cdot\eta_{v_s}^{\alpha_s}$, the associated constant is given by
$$C=q_{w_1}^{y_1}q^{y_2}_{w_2}\cdots q_{w_k}^{y_k} C''.$$
Hence using these elements, we know
$$\{Cp_1^{t_1}\cdots p_s^{t_s}\ell\}_{(t_1,\dots,t_s)\in\N^{s}}\subset\mathcal{L}_p(M),$$
where $j\mid C$, as required.
\end{proof}


\subsection{Arithmetic progressions are a commensurability invariant.}

\noindent We now move on to the proof Theorem \ref{CommInvTheorem}, which we briefly outline.
Let $M$ be the locally symmetric space obtained via quotienting by the lattice $\Gamma=G(\mathcal{O}_K)$ and let $M'$ be any commensurable manifold, with common finite sheeted cover $X$.
Given a primitive geodesic on $M'$, we transfer it via $X$ to (a finite cover of) a primitive geodesic on $M$.
Using this geodesic, we create a very long arithmetic progression in $\mathcal{L}_p(M)$ and again, via $X$, transfer the geodesics in this progression back to (finite covers of) primitive geodesics on $M'$.
So long as the initial progression on $M$ is sufficiently long, we will be able to guarantee that we can find a $k$-term progression in this subset of $\mathcal{L}_p(M')$.
We give this rigorously in what follows.

\begin{proof}[Proof of Theorem \ref{CommInvTheorem}]
First some notation.
With $M$, $M'$, and $X$ as above, let degree $d_M$ and $d_{M'}$ denote the degree of the cover $X$ of $M$ and $M'$ respectively.
Fix any $\ell'\in\mathcal{L}_p(M')$ and let $c'_{\gamma'}$ be any fixed primitive geodesic with length $\ell'$.
Let $D=\prod_{d\mid d_{M}}d\prod_{d'\mid d_{M'}}d'$ and fix a natural number $N$ bigger than or equal to the Van der Waerden number $W(\sigma(d_{M})\sigma(d_{M'}),k)$, where $\sigma(m)$ is the function that counts the number of positive divisors of $m$ (including $1$),
We now construct a $k$-term progression in $\mathcal{L}_p(M')$ containing $\ell'$.

Lifting $c'_{\gamma'}$ to $X$ and pushing it down to $M$, we obtain (a finite cover of) a primitive geodesic $c_\gamma$ with length $\ell=d'\ell'/d$ for some $d\mid d_{M}$ and some $d'\mid d_{M'}$.
With $X$ as above, Proposition \ref{progglueinglemma} combined with Claim \ref{denomclearclaim} applied to $D$, furnishes a natural number $C$ such that $C=C'D$ and a subset
$$\{C(ai+b)\ell\}_{i=1}^{N}=\{C'D(ai+b)\ell\}_{i=1}^{N},$$
of the primitive length spectrum of $M$.
Now fix primitive geodesic representatives $\{c_{\gamma_i}\}_{i=1}^N$ for each length in this subset.
For each $1\le i\le N$, we lift $c_{\gamma_i}$ to $X$ and subsequently project them back to $M'$ to obtain (finite covers of) primitive geodesics $\{c'_{\gamma'_i}\}_{i=1}^N$.
This will have the effect of multiplying each length $C(ai+b)\ell$ by some rational number $p/q$ where $p\mid d_M$ and $q\mid d_{M'}$.
There are at most $\sigma(d_M)\sigma(d_{M'})$ many choice of $p/q$.
By assigning a distinct color to each $p/q$, color the set $\{1,\dots, N\}$.
Van der Waerden's theorem then implies that there is a monochromatic $k$-term arithmetic progression $\{a'i+b'\}_{i=1}^k$ in $\{1,\dots, N\}$.
Consequently, we see that
$$\left\{C(a(a'i+b)+b)\ell\right\}_{i=1}^{k}=\left\{\frac{C'D d'p}{qd}\left(a(a'i+b')+b\right)\ell'\right\}_{i=1}^{k}\subset\mathcal{L}_p(M'),$$
for a fixed $p\mid d_M$ and a fixed $q\mid d_{M'}$.
As $d$ divides $d_M$, we see by definition of $D$ that $(C'D d'p/qd)$ is a natural number.
Hence, we have constructed a $k$-term arithmetic progression containing integer multiples of $\ell'$ in $\mathcal{L}_p(M')$, as required.
\end{proof}


\end{document}